\documentclass[russian,english]{article}
\usepackage{amsthm,amsfonts, amsbsy, amssymb,amsmath,graphicx}
\usepackage{graphics}
\usepackage{babel}
\newtheorem{thm}{Theorem}
\newtheorem{lem}{Lemma}
\newtheorem{re}{Remark}

\newtheorem{st}{Statement}
\newtheorem{cj}{Conjecture}

\textwidth=15.5cm \textheight=21cm \oddsidemargin=11.3mm

\newcounter{tdfn}
\setcounter{tdfn}{1}

\newcounter{trk}
\setcounter{trk}{1}
\newenvironment{rk}
{\vspace{0.15cm}{\bf Remark \arabic{trk}.}} {\par
\addtocounter{trk}{1}}

\def\R{{\mathbb R}}
\def\Q{{\mathbb Q}}
 \def\Z{{\mathbb Z}}
 \def\0{{\mathbbf 0}}
 \def\1{{\mathbbf 1}}

 \def\N{{\mathbb N}}

\title{On The Chromatic Numbers of Integer and Rational Lattices}

\author{Vassily Olegovich Manturov
\footnote{Peoples' Friendship University of Russia, vomanturov at
yandex.ru} \footnote{The  author  was partially supported by grants
of the Russian Government 11.G34.31.0053, RF President NSh
1410.2012.1, Ministry of Education and Science of the Russian
Federation 14.740.11.0794}}

\date{}

\begin{document}

\maketitle

\begin{abstract}
In the present paper, we give new upper bounds for the chromatic
numbers for integer lattices and some rational spaces and  other
lattices. In particular, we have proved that for any concrete
integer number $d$, the chromatic number of $\Z^{n}$ with critical
distance $\sqrt{2d}$ has a polynomial growth in $n$ with exponent
less than or equal to $d$ (sometimes this estimate is sharp). The
same statement is true not only in the Euclidean norm, but also in
any $l_{p}$ norm. Besides, we have given concrete estimates for some
small dimensions as well as upper bounds for the chromatic number of
$\Q_{p}^{n}$, where by $\Q_{p}$ we mean the ring of all rational
numbers having denominators not divisible by some prime numbers.
\end{abstract}

MSC: 05A17,11P83

Keywords: Integer Lattice, Chromatic number, Colouring

\section{Introduction}

By the {\em chromatic number} of a metric space $M$ with {\em
forbidden distance} (or {\em critical distance}) $d$ we mean the
minimal cardinality of a set $S$ for which there exists a map
$f:M\to S$ such that for every two points $x,y\in M$ at  distance
$d$ we have $f(x)\neq f(y)$. {\bf Notation:} $\chi(M,d)$. For
shorthand we write $\chi(M)$ for $\chi(M,1)$.

The chromatic numbers of Euclidean spaces and linear spaces over the
rational numbers (if the norm is Euclidean, we denote them by
$\Q^{n}$) were studied by many authors, see, e.g.,
\cite{Rai,BP,LR,Cibulka,Kup,BMP} and references therein.

The chromatic numbers for integer lattices in $l_{2}$ and $l_{1}$
norms were studied, in particular, by Z.F\H{u}redi and J.Kang
\cite{FK}, where a lower bound exponential in $n$ was found for
$\chi(\Z^{n},\sqrt{r})$ for even $r$ in $l_{2}$-norm, and similarly,
for $\chi(\Z^{n},\sqrt{r})$ for even $r$ in $l_{1}$-norm, however,
the result really proved dealt with some special case when $r$
depends on $n$ (e.g., $r=2q,n=4q-1$ for some integer $q$). Some
better estimates were obtained in \cite{Rai2}.

It turns out that the best known upper asymptotic estimates for the
chromatic number of rational spaces are exactly those known ones for
the Euclidean spaces: the chromatic number of $\Q^{n}$ is known to
be bounded from above by $(3+o(1))^{n}$ as $n$ tends to infinity
\cite{LR,Rai}.

In the present paper, we undertake a systematic treatment of integer
lattices and some variations of them: lattices over some completions
of the ring of integers, lattices over rational numbers with
denominators not divisible by some concrete numbers.

The paper is organised as follows.

In the next section, we deal with low-dimensional cases of integer
lattices, in particular we find cases when the chromatic number is
equal to $3$. These results were not previously found in the
literature.

The main result of our paper goes in Section 3: we prove that for a
fixed $m$, the chromatic number $\chi(\Z^{n},\sqrt{2m})$ is
estimated from above by $c\cdot n^{m}$ in any norm $l_{p}$, where
$c$ does not depend on $n$.

This result goes in contrast with the similar result concerning
rational lattices because for the latter, it is known that the {\em
lower} bound is exponential.

Then we revisit some known lower bounds coming from the
Frankl--Wilson theorem, which can be used for obtaining lower bounds
for integer lattices.

Another interesting case deals with estimates for the chromatic
numbers of rational spaces. We give new upper estimates for lattices
over rings of rational numbers whose denominators are coprime with
$5$ and $3$, respectively (Theorems \ref{5grid} and \ref{3grid}).

As a step towards new estimates for rational lattices, we consider
lattices over rational numbers with some forbidden denominators.

The paper is concluded by further discussion and open problems.

\subsection{Acknowledgements}

I am grateful to A.M.Raigorodskii and A.B.Kupavskii for various
fruitful discussions of the subject, to I.D.Shkredov for drawing my
attention to the papers \cite{OB,Ruzsa1,Ruzsa2}, and to J.-H.Kang
for a discussion of the paper \cite{FK}.

The final version of this text was written during my stay in the
Mathematical Sciences Center, Tsinghua University, Beijing, People's
Republic of China. I am very grateful to the Center for the creative
research atmosphere.

\section{Low-dimensional integer lattices}

The following theorem is evident, see, e.g., \cite{BP}.
\begin{thm}
For every  $k$ which can be represented as a sum of two integer
squares, one has $\chi(\Z^{2},\sqrt{k})=2$. Otherwise,
$\chi(\Z^{2},\sqrt{k})=1$. Moreover, the same statement is true for
$\Z^{n}$.
\end{thm}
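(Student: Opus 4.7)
My plan is to separate the two directions of the dichotomy, treating $\Z^2$ in detail and then indicating how the construction lifts to $\Z^n$.

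First I would dispatch the "otherwise" case. For any distinct $x, y \in \Z^2$, the squared distance $\|x-y\|^2 = (x_1-y_1)^2 + (x_2-y_2)^2$ is by construction a sum of two integer squares. Hence if $k$ admits no such representation, no pair of lattice points lies at distance $\sqrt k$, and the constant map suffices: $\chi(\Z^2, \sqrt k) = 1$.

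For the main direction, assume $k = a^2 + b^2$. The two points $(0,0)$ and $(a,b)$ are adjacent, giving $\chi(\Z^2, \sqrt k) \ge 2$ immediately. The substantive step is to exhibit a proper 2-colouring. I would try one of the form $f(x_1, x_2) = \bigl(\alpha \lfloor x_1/2^t \rfloor + \beta \lfloor x_2/2^t \rfloor\bigr) \bmod 2$, choosing $t$ and $\alpha, \beta \in \{0,1\}$ as follows. Write $k = 4^t k'$ with $k' \not\equiv 0 \pmod 4$. Since the only squares modulo $4$ are $0$ and $1$, the congruence $p^2 + q^2 \equiv 0 \pmod 4$ forces both $p$ and $q$ even; iterating this $t$ times shows every representation $k = p^2 + q^2$ has the form $(p,q) = 2^t(p',q')$ with $(p')^2 + (q')^2 = k'$. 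If $k'$ is odd, then exactly one of $p', q'$ is odd, hence $p' + q'$ is odd, and $(\alpha, \beta) = (1,1)$ works: along any edge $f$ changes by $p' + q' \equiv 1 \pmod 2$. If $k' \equiv 2 \pmod 4$, both $p', q'$ are odd and $(\alpha, \beta) = (1,0)$ works analogously.

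For the "moreover" part about $\Z^n$, the trivial direction (no representation of $k$ by $\le n$ squares, hence no edges, hence $\chi = 1$) carries over verbatim. For the affirmative direction, the cleanest situation is $k$ odd: then the all-ones functional $f(x) = \sum_{i=1}^n x_i \bmod 2$ is nonzero on every edge vector $v$, since $\sum v_i \equiv \sum v_i^2 = k \equiv 1 \pmod 2$. In the even case one applies the same $4^t$-rescaling and chooses a suitable mod-$2$ linear functional on the rescaled lattice. The main obstacle I anticipate is exactly this last step: in higher dimensions the system of parity constraints imposed by all representations of $k$ as a sum of at most $n$ squares may be over-determined, so verifying that an appropriate linear functional still exists reduces to a careful parity analysis of the edge vectors modulo $2$, which I would work out along the lines of the two sub-cases $k' \bmod 4$ used for $\Z^2$.
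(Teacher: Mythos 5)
The paper itself offers no proof of this theorem (it is declared evident, with a pointer to Benda--Perles), so there is nothing in the text to compare your argument to; judged on its own terms, your treatment of $\Z^{2}$ is complete and correct. The reduction $k=4^{t}k'$ with $k'\not\equiv 0\pmod 4$, the observation that every representation $k=p^{2}+q^{2}$ rescales as $(p,q)=2^{t}(p',q')$, and the choice of the mod-$2$ functional according to whether $k'$ is odd or $k'\equiv 2\pmod 4$ together yield a valid proper $2$-colouring; the lower bound and the ``otherwise'' case are indeed trivial.

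The genuine gap is precisely the step you flag at the end, and it is not a technical hurdle that a ``careful parity analysis'' will overcome: for $n\ge 3$ and $k\equiv 2\pmod 4$ the assertion $\chi(\Z^{n},\sqrt{k})=2$ is false. Take $n=3$ and $k=2=1^{2}+1^{2}$: the points $(0,0,0)$, $(1,1,0)$, $(1,0,1)$ form a triangle with all sides equal to $\sqrt{2}$, so $\chi(\Z^{3},\sqrt{2})\ge 3$; the paper itself later proves $\chi(\Z^{3},\sqrt{2})=4$. In the language of your linear functionals, the constraints $c_{i}+c_{j}\equiv 1\pmod 2$ for all $i\neq j$ are already inconsistent once $n\ge 3$. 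So the ``moreover'' clause cannot be read literally; what does survive in $\Z^{n}$ is (i) the dichotomy $\chi=1$ exactly when $k$ is not a sum of at most $n$ integer squares and $\chi\ge 2$ otherwise, and (ii) the equality $\chi=2$ for odd $k$ that is so representable, which your coordinate-sum parity functional establishes and which is the only form of the statement the paper actually uses afterwards. You should state this restriction explicitly rather than promising to resolve a case that has no resolution.
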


For $\Z^{3}$, it makes sense to consider only critical distances of
type $\sqrt{4l+2},l$ is odd: in the case of odd distances we know
that the chromatic number does not exceed two by parity reasons. As
for the case when we have $4 l,l\in \Z$, under the square root, it
is reduced in $\R^{3}$ to the case of $\sqrt{l}$ since every
representation of $4l$ as a sum of three squares of integers
consists of three even squares.

\begin{thm}[Upper estimate: the universal colouring]
For every $k=4l+2,l\in \Z$, we have $\chi(\Z^{3},\sqrt{k})\le 4$.
\end{thm}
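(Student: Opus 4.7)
The plan is to produce an explicit $4$-coloring of $\Z^{3}$ that depends only on the parities of coordinates and works for \emph{every} forbidden distance of the form $\sqrt{4l+2}$ simultaneously; this is what makes the colouring ``universal''.

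First, I would reduce everything to a mod-$4$ analysis of the coordinate differences. If $x,y\in\Z^{3}$ satisfy $\|x-y\|^{2}=4l+2$, write $k=(x_{1}-y_{1})^{2}+(x_{2}-y_{2})^{2}+(x_{3}-y_{3})^{2}$. Since a perfect square is $0\pmod 4$ when its base is even and $1\pmod 4$ when its base is odd, the congruence $k\equiv 2\pmod 4$ forces exactly two of the three differences $x_{i}-y_{i}$ to be odd and the remaining one to be even.

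Next, I would define the colouring $f\colon\Z^{3}\to\{0,1\}^{2}$ by $f(x_{1},x_{2},x_{3})=(x_{1}\bmod 2,\,x_{2}\bmod 2)$; it uses $4$ colours. To verify that it is proper for the critical distance $\sqrt{4l+2}$, suppose $f(x)=f(y)$. Then $x_{1}-y_{1}$ and $x_{2}-y_{2}$ are both even, and hence at most one of the three coordinate differences (namely $x_{3}-y_{3}$) can be odd. This contradicts the previous paragraph, so $x$ and $y$ cannot be at squared distance $4l+2$, giving $\chi(\Z^{3},\sqrt{4l+2})\le 4$.

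I do not foresee a real obstacle: the whole argument rests on the standard observation that squares of integers lie in $\{0,1\}\pmod 4$. The only judgement call is that the specific choice of ``which two coordinates to use for the parity'' is arbitrary; symmetric alternatives such as $(x_{1}+x_{2}\bmod 2,\,x_{2}+x_{3}\bmod 2)$ work equally well, and either choice makes the colouring simultaneously valid for every $l$, which is the point of calling it universal.
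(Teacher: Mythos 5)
Your proposal is correct and is essentially the paper's own argument: both rest on the observation that a sum of three integer squares is $\equiv 2\pmod 4$ exactly when two of the three bases are odd, and both then colour by the parities of two fixed coordinates (the paper uses the first and third, you use the first and second — equivalent by symmetry). Your write-up is in fact a bit cleaner, since the paper's preliminary restriction to points with even coordinate sum and the shift by $(1,0,0)$ are not actually needed for the verification.
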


\begin{proof}
We consider sets of points of the three-dimensional lattice where
the sum of the three coordinates is even. The colouring of the other
points will be obtained by shifting this colouring by a vector
$(1,0,0)$.

Let us consider the following ``universal $4$-colouring'' with four
colours $(0,0),(0,1),(1,0),(1,1)$, where with a triple of integer
coordinates in $\Z^{3}$ we associate two numbers, the first being
the parity of the first coordinate, and the second being the parity
of the third coordinate.

It is clear that if two points in the integer $3$-dimensional
lattice are at distance $\sqrt{4l+2}$, then either they have
different parity of the $z$ coordinate, or they have different
parity of the $x$ coordinate.
\end{proof}

To get lower estimates we shall often use the {\em Raiskii--Moser
spindle} \cite{MM},\cite{Raiskii}, and its generalisations. By the
two--dimensional spindle (for critical distance $1$) we mean a graph
on $7$ vertices which looks as follows: one vertex $A$ forms two
unit-distance triangles $ABC$ and $AB'C'$, there are also two
unit-distance triangles $BCD$ and $B'C'D'$ for some points $D$ and
$D'$ which are at distance $1$. When trying to colour this spindle
with three colours, we get to a contradiction: $A$ and $D$ have to
have the same colour, the same is true for $A$ and $D'$. On the
other hand, $D$ and $D'$ are at  distance $1$, so, their colours
should be different. One can take the homothety of this spindle for
any critical distance, the resulting graph embedding will be called
{\em spindle} as well.

In dimension $n$, instead of triangles, one takes two pairs of
unit-distance $n$-simplices, say, $(A,A_{1},\dots, A_{n})$ and
$(A,A'_{1},\dots, A'_{n})$ with a common point $A$ and takes the
points $B$ and $B'$ to be at the unit distance from all
$A_{i},i=1,\dots, n$ and from all $A'_{i},i=1,\dots, n$,
respectively. Obviously, if $B$ and $B'$ are at distance $1$ then
this graph admits no proper $(n+1)$-colouring, which was first used
to prove $\chi (\R^{n},1)\ge n+2$.

In rational spaces or integer lattices for concrete critical
distances, one usually can not find spindles directly. Nevertheless,
the argument can be corrected if one considers the {\em generalised
spindle} (or {\em Kupavskii spindle}). Assume we have two pairs of
triangles $(A,B,C,D), (A,B',C',D')$ for some critical distance as
above (in spaces of higher dimensions, two pairs of simplices) and
that the points $D$ and $D'$ are not at a critical distance. Denote
the distance $l(D,D')$ by $d$. Assume that for every proper
colouring there are two points ${\tilde D}, {\tilde D'}$ of
different colours and an isometry of the space which takes $D\mapsto
{\tilde D}, D'\mapsto {\tilde D}'$. Then taking the image of all the
points $(A,B,C,D,B',C',D')$, we see that there is no way to colour
them with $3$ colours.

This statement (in some more generality) for the case of Euclidean
spaces was proved by A.B.Kupavskii \cite{Kup}.
 Certainly, isometries in the integer
case have to be treated in a more delicate way than those in the
real case.

Now, to prove the lower estimates for $\Z^{3}$ we shall often use
the generalized spindle (Kupavskii spindle) construction. Note that
in $\Z^{3}$, the equality of distances $l(D,D')$ and $l({\tilde
D},{\tilde D}')$ does not guarantee the existence of such an
isometry taking $D$ to ${\tilde D}$ and $D'$ to ${\tilde D'}$.

To this end, we shall need the following
\begin{lem}[Integral Analogue of the Kupavskii Lemma]
Let $A=(a_{1},a_{2},a_{3})$ be a point in $\Z^{3}$, so that
$a_{1}+a_{2}+a_{3}$ is even and $GCD(a_{1},a_{2},a_{3})=1$. Assume
further, $m=b_{1}^{2}+b_{2}^{2}+b_{3}^{2}$ is even for some integers
$b_{1},b_{2},b_{3}$ with $GCD(b_{1},b_{2},b_{3})=1$.
 Then for every proper colouring (with at least two distinct colours) of $\Z^{3}$ with forbidden distance $\sqrt{m}$,
 there exist $P,Q\in \Z^{3}$ and an isometry of
$\Z^{3}$ which takes the origin to $P$ and $A$ to $Q$ such that $P$
and $Q$ have different colours.  \label{fundlemma}
\end{lem}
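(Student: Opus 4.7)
The plan is to proceed by contradiction. Suppose that for some proper colouring $f$ with forbidden distance $\sqrt m$, every pair $(P,Q)$ which is the image of $(0,A)$ under an integer isometry satisfies $f(P) = f(Q)$. Every integer isometry of $\Z^3$ is of the form $x \mapsto \sigma(x) + t$ with $t \in \Z^3$ and $\sigma$ a signed permutation (signed permutations being the only orthogonal matrices preserving $\Z^3$). So the assumption reads $f(t) = f(t + \sigma(A))$ for all $t \in \Z^3$ and all $\sigma \in O(3,\Z)$; equivalently, $f$ is invariant under the subgroup $L \leq \Z^3$ generated by the orbit $\{\sigma(A) : \sigma \in O(3,\Z)\}$ of $A$.

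The crux of the argument, and the main obstacle, is to identify $L$ with the even-sum sublattice $E = \{v \in \Z^3 : v_1 + v_2 + v_3 \equiv 0 \pmod 2\}$. The inclusion $L \subseteq E$ is immediate since every signed permutation of $A$ has the same coordinate-sum parity as $A$, which is even. For the reverse, I would first observe that for each $i$, the difference of the signed permutation which places $a_i$ in the first slot and the one that further flips the sign of the first coordinate equals $(2a_i, 0, 0)$, so $(2a_i, 0, 0) \in L$; the hypothesis $\gcd(a_1,a_2,a_3) = 1$ then yields $(2,0,0) \in L$ by Bezout, and by symmetry $(0,2,0), (0,0,2) \in L$, so $2\Z^3 \subseteq L$. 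Reducing modulo $2\Z^3$, the parity and gcd hypotheses on $A$ force exactly two of the $a_i$ to be odd, so $A \bmod 2$ is a weight-two vector of $(\Z/2)^3$; letting $\sigma$ run through the coordinate permutations produces all three weight-two vectors, which together span the even-weight subspace. Hence $L/2\Z^3 = E/2\Z^3$, and therefore $L = E$.

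To finish, since $m = b_1^2 + b_2^2 + b_3^2$ is even and $b_i^2 \equiv b_i \pmod 2$, the sum $b_1 + b_2 + b_3$ is even; thus $B := (b_1, b_2, b_3)$ lies in $E = L$. By $L$-invariance of $f$ we get $f(0) = f(0+B) = f(B)$, while $|B|^2 = m$ and properness of the colouring give $f(0) \neq f(B)$, a contradiction. The hypothesis that $f$ uses at least two colours is essentially only needed to rule out the trivial constant colouring, and is in fact automatic once $m$ admits any representation as a sum of three squares.
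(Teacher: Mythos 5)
Your proof is correct and is essentially the paper's argument in contrapositive form: the paper builds a chain from the origin to $(b_{1},b_{2},b_{3})$ whose consecutive steps are isometric images of $A$ and applies pigeonhole along it, which is exactly your statement that $B$ lies in the subgroup $L$ generated by the signed-permutation orbit of $A$ together with the observation that a colouring admitting no bad pair $(P,Q)$ would be $L$-periodic, hence constant on $\{0,B\}$ despite $|B|^{2}=m$. A worthwhile bonus is that your computation $L=E$ (via $2\Z^{3}\subseteq L$ from Bezout and the mod-$2$ reduction showing the weight-two vectors generate the even-sum classes) supplies a complete proof of the chain-existence step that the paper leaves to the reader as an exercise.
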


\begin{proof}
Indeed,  there is a chain $X_{0},X_{1},\dots, X_{k}$ of points in
$\Z^{3}$ from $(0,0,0)=X_{0}$ to $(b_{1},b_{2},b_{3})=X_{k}$ with
all $X_{i},i=1,\dots, k-1$ such that for every two adjacent
$X_{i},X_{i+1}$ the vector $X_{i+1}-X_{i}$ is obtained from the
vector $(a_{1},a_{2},a_{3})$ by an isometry of $\Z^{3}$. Now, since
$X_{0}$ and $X_{k}$ are colored differently, there exist two
adjacent $X_{i}$ and $X_{i+1}$ having different colours.

The existence of such a chain is left for the reader as an
exercise.\label{exrf}
\end{proof}

\begin{thm}
$\chi(\Z^{3},\sqrt{2})=4$.
\end{thm}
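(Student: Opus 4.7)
The upper bound $\chi(\Z^{3},\sqrt{2})\le 4$ is immediate from the previous theorem applied with $l=0$, so that $k=4l+2=2$: the ``universal $4$-colouring'' by parities of the $x$- and $z$-coordinates (together with its $(1,0,0)$-translate on the odd-sum sublattice) does the job. All the work is therefore in the matching lower bound $\chi(\Z^{3},\sqrt{2})\ge 4$.

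My plan is to bypass the spindle machinery entirely and simply exhibit a $K_{4}$ inside the distance-$\sqrt{2}$ graph on $\Z^{3}$. Consider the four even-parity vertices of the unit cube,
\[ A=(0,0,0),\quad B=(1,1,0),\quad C=(1,0,1),\quad D=(0,1,1). \]
Any two of these four points differ in exactly two coordinates, each by $\pm 1$, so every pairwise squared Euclidean distance equals $2$. In other words, $\{A,B,C,D\}$ is the classical regular tetrahedron inscribed in the unit cube, and crucially all of its vertices lie in $\Z^{3}$. Hence in any proper colouring of $\Z^{3}$ with forbidden distance $\sqrt{2}$ the points $A,B,C,D$ must receive four pairwise distinct colours, forcing $\chi(\Z^{3},\sqrt{2})\ge 4$.

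There is really no obstacle here: the only ``step'' is to spot the tetrahedral $K_{4}$, after which the verification reduces to six one-line distance computations. In particular, for this specific critical distance the Integral Analogue of the Kupavskii Lemma (Lemma \ref{fundlemma}) is not required, since the presence of a direct $K_{4}$ makes the spindle trick unnecessary. The lemma is presumably introduced at this point because for other critical distances $\sqrt{k}$ in $\Z^{3}$ no such immediate tetrahedron is available, and one must patch the Kupavskii spindle argument through the integral isometry-chain produced by that lemma.
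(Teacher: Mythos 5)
Your proof is correct, and for the lower bound it takes a genuinely different (and simpler) route than the paper. The paper also gets the upper bound from the universal $4$-colouring theorem with $k=4l+2$, $l=0$, but for the bound $\chi(\Z^{3},\sqrt{2})\ge 4$ it builds a Moser-type spindle: the two triangles on $(0,0,0),(0,1,1),(1,0,1),(1,1,2)$ together with their images under swapping the second and third coordinates, the tips $(1,1,2)$ and $(1,2,1)$ being at distance $\sqrt{2}$, which rules out a $3$-colouring. You instead observe that the distance-$\sqrt{2}$ graph on $\Z^{3}$ already contains a $K_{4}$, namely the regular tetrahedron $(0,0,0),(1,1,0),(1,0,1),(0,1,1)$ inscribed in the unit cube; all six pairwise squared distances are indeed $2$, so four colours are forced outright, with no spindle and no appeal to Lemma \ref{fundlemma}. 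Your clique argument is the more economical one for this specific distance; the spindle construction the paper uses here is best read as a warm-up for the subsequent cases (e.g.\ $\sqrt{2m}$ with $m=a^{2}+ab+b^{2}$), where no clique of size $4$ is available in $\Z^{3}$ and the generalised (Kupavskii) spindle combined with the integral isometry lemma is genuinely needed -- exactly as you surmise in your closing remark.
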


One can easily construct the spindle: we take the two triangles
$(0,0,0),(0,1,1),(1,0,1),(1,1,2)$ and a similar pair of triangles
which is obtained from the first pair by permuting the second and
the third coordinates. The points $(1,1,2)$ and $(1,2,1)$ are at the
distance $\sqrt{2}$.

Examples of forbidden distances for which the chromatic number in
the three-dimensional space is equal to four were well known, see,
e.g., \cite{BP}. M.Benda and M.Perles in \cite{BP} asked the
question whether there exists a forbidden distance in $\Q^{3}$ for
which the chromatic number is equal to $3$.

\begin{thm}
For $k=10+12 l,l\in \Z$, one has $\chi(\Z^{3},\sqrt{k})=3$.
\end{thm}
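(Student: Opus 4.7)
The argument splits into the upper bound $\chi(\Z^3,\sqrt k)\le 3$ and the lower bound $\chi(\Z^3,\sqrt k)\ge 3$.

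For the upper bound I would colour each lattice point $(x,y,z)$ by $(x+y+z)\bmod 3$. To see this colouring is proper, note $k=10+12l\equiv 1\pmod 3$, and since squares modulo $3$ lie in $\{0,1\}$, any representation $a^2+b^2+c^2=k$ forces exactly one of $a^2,b^2,c^2$ to be $\equiv 1\pmod 3$ with the other two $\equiv 0\pmod 3$. Consequently exactly one of $a,b,c$ is $\equiv\pm 1\pmod 3$ while the other two are multiples of $3$, so every critical-distance displacement satisfies $a+b+c\equiv\pm 1\not\equiv 0\pmod 3$.

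For the lower bound I would apply the generalised Kupavskii spindle, invoked through Lemma \ref{fundlemma}. Since $k\equiv 2\pmod 4$ and $k$ is a sum of three squares (Legendre), it has a primitive representation $k=b_1^2+b_2^2+b_3^2$ with $\gcd(b_1,b_2,b_3)=1$, so the hypothesis of the lemma on the critical distance is met. Assume for contradiction that a proper $2$-colouring $f$ of $(\Z^3,\sqrt k)$ exists. I would exhibit two distance-$\sqrt k$ vectors $\alpha,\beta$ from the origin for which $A:=\alpha-\beta$ is primitive (that is, $\gcd(A)=1$) and has even coordinate sum. Lemma \ref{fundlemma} applied to $A$ then produces an isometry $T$ of $\Z^3$ with $P:=T(0)$ and $Q:=T(A)$ satisfying $f(P)\ne f(Q)$. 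Setting $R:=T(\alpha)$, distance preservation of $T$ gives $|R-P|=|\alpha|=\sqrt k$ and $|R-Q|=|\alpha-A|=|\beta|=\sqrt k$, so $R$ differs in colour from both $P$ and $Q$; with only two colours this forces $f(P)=f(Q)$, contradicting the lemma.

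The main obstacle is the uniform construction of the pair $(\alpha,\beta)$. A natural candidate takes $\alpha$ to be a primitive representation of $k$ and $\beta$ its cyclic coordinate shift, which automatically yields $\sum_i A_i=0$. Writing $\alpha=(a,3b,3c)$ with $3\nmid a$ (legitimised by the mod-$3$ observation from the upper bound), the difference $A=(a-3b,\,3(b-c),\,3c-a)$ has no entry divisible by~$3$, so $3\nmid\gcd(A)$, and in most cases one verifies $\gcd(A)=1$ directly. The troublesome case is when the only primitive representation of $k$ has the shape $(a,3b,3b)$, for then every permutation gives $\gcd(A)=|a-3b|$, possibly larger than~$1$; even among representations with distinct coordinates one can encounter a nontrivial common factor between $a-3b$ and $b-c$. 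I would resolve this by showing that every $k=10+12l$ admits at least one primitive representation with pairwise distinct coordinates for which the cyclic shift yields $\gcd(A)=1$ (a routine number-theoretic claim, with finitely many small exceptions dispatched by inspection), so that a suitable $(\alpha,\beta)$ is always available.
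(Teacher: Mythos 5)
Your upper bound is correct and in fact cleaner than the paper's: since $k\equiv 1\pmod 3$ forces every representation $k=a^2+b^2+c^2$ to have exactly one coordinate coprime to $3$, every critical displacement has coordinate sum $\equiv\pm1\pmod 3$, so $(x+y+z)\bmod 3$ properly $3$-colours all of $\Z^3$ in one stroke. The paper instead classifies the triples $(a,b,c)$ modulo $6$ and colours each parity class of $x+y+z$ by its residue modulo $6$; your version avoids both the parity split and the mod-$6$ case analysis. (Minor slip: the middle entry $3(b-c)$ of your difference vector \emph{is} divisible by $3$; what you need, and what is true, is only that the first and third entries are not, whence $3\nmid\gcd(A)$.)

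For the lower bound, note that the paper does not prove $\chi\ge 3$ inside this theorem at all: it is deferred to item 2 of the summary theorem, where non-$2$-colourability for even $m$ is derived from an odd closed chain built from isometric copies of a primitive representing vector. Your route through Lemma \ref{fundlemma} plus the two-edge path $P$--$R$--$Q$ is a legitimate alternative inside the paper's framework, and your check of the lemma's hypotheses on $k$ is fine. The genuine gap is exactly where you flag it: producing, for \emph{every} $k=10+12l$, representing vectors $\alpha,\beta$ with $\gcd(\alpha-\beta)=1$. Your proposed repair (some primitive representation with pairwise distinct coordinates whose cyclic shift works, ``with finitely many small exceptions'') is left unproven, and it is not a routine claim; e.g.\ for $k=82$ both cyclic shifts of the primitive representation $(8,3,3)$ give $\gcd=5$, so one is forced to hunt for other representations with no a priori guarantee of success. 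The gap closes uniformly, however: take any primitive $\alpha=(a,3b,3c)$ with $3\nmid a$ and set $\beta=(3b,3c,-a)$, so $A=\alpha-\beta=(a-3b,\,3b-3c,\,3c+a)$ has even coordinate sum $2a$. A prime $p\mid\gcd(A)$ satisfies $p\ne 3$ (since $3\nmid a-3b$) and $p\ne 2$ (else $a,3b,3c$ would all share a parity, impossible when exactly two are odd because $k\equiv 2\pmod 4$); then $p\mid b-c$, hence $p\mid a-3c$ and $p\mid a+3c$, so $p\mid 2a$ and $p\mid 6c$, forcing $p\mid\gcd(a,3b,3c)=1$, a contradiction. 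With that substitution your argument is complete.
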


\begin{proof}
One can easily check that any decomposition of $10+12l,l\in \Z$ into
a sum of three integer squares looks like $a^{2}+b^{2}+c^{2}$, where
modulo $6$ reduction of the triple $(a,b,c)$ coincides with one of
the following triples (up to order):
$(1,3,0),(5,3,0),(3,3,2),(3,3,4)$.

Then we colour the points with even sum of coordinates as follows:
for the point with coordinates $x,y,z$ we take the residue classes
of $x+y+z$ modulo $6$, which provides a three-colouring. Analogously
one gets a three-colouring for the set of those points whose sum of
coordinates is odd.
\end{proof}

Now, we turn to those forbidden distances for $\Z^{3}$ for which the
chromatic number is equal to four.

\begin{thm}
If $m=a^{2}+ab+b^{2}$ for some integers $a,b$ then
$\chi(\Z^{3},\sqrt{2m})=4$. In particular, let $p=6k+1$ be a prime
number for an integer $k$. Then $\chi(\Z^{3},\sqrt{2p})=4$.
\end{thm}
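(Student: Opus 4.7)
The plan is to bound $\chi(\Z^3, \sqrt{2m})$ above and below by $4$. For the upper bound I would first reduce to the primitive case $\gcd(a, b) = 1$: if $d = \gcd(a, b) > 1$ then $m = d^2 m'$ with $m' = (a/d)^2 + (a/d)(b/d) + (b/d)^2$, and a scaling argument (immediate when $d$ is a power of $2$ using $2\Z^3 \cong \Z^3$ iteratively, and requiring some norm-decomposition bookkeeping otherwise) reduces to the coprime case. Once $\gcd(a, b) = 1$ the integer $m = a^2 + ab + b^2$ is odd, so $2m \equiv 2 \pmod 4$ and Theorem 2 gives $\chi \le 4$ directly.

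For the lower bound I would construct a Kupavskii generalised spindle and apply Lemma \ref{fundlemma}. The first equilateral triangle through $A = 0$ would be
\[
B = (a+b,\ -a,\ -b),\qquad C = (a,\ b,\ -(a+b)),
\]
which satisfies $|B|^2 = |C|^2 = 2m$ and $B \cdot C = m$, with rhombus-tip $D = B + C = (2a+b,\ b-a,\ -a-2b)$. The strategy is to exhibit a second equilateral triangle $(0, B', C')$ of side $\sqrt{2m}$ whose tip $D' = B' + C'$ makes the vector $D - D'$ have even coordinate-sum and GCD $1$. Granted such a spindle, the Kupavskii argument runs as follows: in any proper $3$-colouring $f$ with forbidden distance $\sqrt{2m}$, the rhombi argument forces $f(D) = f(A) = f(D')$; but $2m = (a+b)^2 + a^2 + b^2$ is a primitive three-squares decomposition (as $\gcd(a+b, a, b) = \gcd(a, b) = 1$), so Lemma \ref{fundlemma} applied with $A_{\text{lem}} := D - D'$ produces points $P, Q \in \Z^3$ with $Q - P$ in the $\Z^3$-isometry orbit of $D - D'$ and $f(P) \ne f(Q)$. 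The $\Z^3$-isometry that carries $(D', D) \mapsto (P, Q)$ then transports the entire spindle to a new rhombi-configuration whose two tips are $P$ and $Q$; applying the rhombi argument there forces $f(P) = f(Q)$, contradicting $f(P) \ne f(Q)$.

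The hard part will be constructing this second rhombus so that $D - D'$ is primitive with even sum. A natural symmetric choice such as $B' = (a+b,\ -b,\ -a)$, $C' = (b,\ a,\ -(a+b))$ gives $D - D' = (a-b)(1, -2, 1)$, which has GCD $1$ precisely when $|a-b| = 1$. For other pairs $(a, b)$ I would exploit the Eisenstein structure $m = N_{\Z[\omega]/\Z}(a + b\omega)$: when $m$ has several essentially distinct factorisations in $\Z[\omega]$ (as every prime $p \equiv 1 \pmod 3$ does, since it splits), each factorisation produces a different equilateral triangle through the origin, and I expect a suitable pair of them to yield a primitive tip-difference. The delicate case is small $m$ with few representations, notably $m = 3$: there all rhombus-tips lie in the coarser sublattice $3 D_3$, so every $D - D'$ has coordinates divisible by $3$; I would replace the direct tip-comparison with a finite-coset analysis, noting that $f$ descends to $\Z^3 / 3 D_3$ (a $54$-vertex quotient) by iteration of the rhombi argument, and verifying directly that the induced $\sqrt{2m}$-edge graph on this quotient has chromatic number at least $4$.
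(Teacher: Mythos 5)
Your overall strategy coincides with the paper's: the upper bound comes from the universal colouring (Theorem 2) after reducing to $2m\equiv 2\ (\mathrm{mod}\ 4)$, and the lower bound comes from a generalised (Kupavskii) spindle transported onto a bichromatic pair via Lemma \ref{fundlemma}. The problem is that the decisive step of the lower bound --- producing, for \emph{every} coprime pair $(a,b)$, a second equilateral triangle through the origin whose tip $D'$ makes the relevant vector ($D-D'$ in your version, the tip itself in the paper's) primitive with even coordinate sum --- is precisely what you do not do. Your only explicit candidate gives $D-D'=(a-b)(1,-2,1)$ and works exactly when $|a-b|=1$; for all other pairs you offer ``I expect a suitable pair of them to yield a primitive tip-difference,'' with no construction and no divisibility analysis, and your fallback for $m=3$ (descent to $\Z^{3}$ modulo the lattice generated by the tip vectors, followed by ``verifying directly'' that the 54-point quotient graph needs 4 colours) is an unperformed finite computation that is the entire content of that case. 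By contrast, the paper closes this point concretely: it fixes the rhombus $A=(0,0,0)$, $B=(a,b,a+b)$, $C=(-b,a+b,a)$, $D=(a-b,a+2b,2a+b)$, exhibits a second tip $D'$ whose coordinate GCD is shown to be $1$ or $3$, kills the GCD-$3$ case by the substitution $b\mapsto-b$, and handles $a,b$ both odd by re-reading the decomposition $2m=a^{2}+b^{2}+(a+b)^{2}$ through a parameter pair with one even member (e.g.\ $(a+b,-b)$), which also makes $m$ odd for the upper bound. Without an analogous explicit choice plus GCD argument, your proof is incomplete at its central point; your instinct that this is delicate is in fact sound (note that the $D'=(-a-b,-2a+b,-a+2b)$ printed in the paper has squared norm $6(a^{2}-ab+b^{2})$ rather than $6m$, so the second tip really must be chosen and checked with care), but flagging the difficulty is not the same as resolving it.

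Two smaller issues. For the upper bound with $d=\gcd(a,b)$ odd there is no sublattice-scaling reduction: a colouring of $d\Z^{3}$ does not extend to $\Z^{3}$. Nothing is needed there anyway, since $m=d^{2}m'$ is still odd, hence $2m\equiv 2\ (\mathrm{mod}\ 4)$ and Theorem 2 applies to $\Z^{3}$ directly; scaling (restriction to the sublattice, or the $4k\mapsto k$ reduction for powers of $2$) is only valid, and only needed, for the lower bound and for even $d$. Second, when you invoke Lemma \ref{fundlemma} you should state explicitly that $2m=(a+b)^{2}+a^{2}+b^{2}$ is a primitive representation and that every rhombus tip has even coordinate sum, so that both hypotheses of the lemma are met; you gesture at the first but the second is used silently.
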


\begin{proof}
Indeed, assume first $m=a^{2}+ab+b^{2}$ for coprime $a,b$.

We suppose our space is $3$-colourable and want to get a
contradiction.
 Let us first assume exactly one of $a$ and $b$ is odd,
without loss of generality, assume $a$ is odd, $b$ is even.

 We have $2m=(a^{2}+b^{2}+(a+b)^{2})$.
So, the distance $\sqrt{2m}$ is realised by vectors with three
coordinates, whose difference has coordinates equal to $\pm a$, $\pm
b$, $\pm (a+ b)$ up to order. So, we shall try different vectors to
construct the generalised integer spindle.

First, take the two triangles $ABC,BCD$ with the following vertices
$A=(0,0,0),B=(a,b,a+b),C=(-b,a+b,a),(a-b,a+2b,2a+b)=D$.

Now, we may get other pairs of triangles with $A=(0,0,0)$ by
permuting the coordinates and taking minus sign for $a$ and/or $b$.
For example, there is a pair of triangles with the free end
$D'=(-a-b,-2a+b,-a+2b)$. Now, it is easy to see that
$GCD(-a-b,-2a+b,-a+2b)$ is either $1$ or $3$. If it is $3$, then by
changing $b$ to $-b$, we get $GCD(-a+b,-2a-b,-a-2b)=1$, and we get
the desired spindle.

In the case when both $a,b$ are odd, we notice that the same pair of
triangles can be obtained starting with $(a,a+b)$, where $(a+b)$ is
even.

In the case when $a,b$ are not coprime, take $c=GCD(a,b),
a=a'c,b=b'c$ and construct analogous spindles for the sublattice
with all coordinates divisible by $c$.

Now we apply Lemma \ref{fundlemma} and see that after applying some
isometry to $\Z^{3}$,  the images ${\tilde D}$ and ${\tilde D'}$
will have different colours. Taking all images of $A,B,C,D,B',C',D'$
we get a contradiction to $3$-colouring of the space.
\end{proof}

Collecting the above results about colourings of $\Z^{3}$, we get
the following
\begin{thm}
We have:

\begin{enumerate}

\item  $\chi(\Z^{3},\sqrt{m})=2$ if and only if $m$ is odd;

\item for even $m$ we have $\chi(\Z^{3},\sqrt{m})$ is either $3$ or
$4$;

\item If $m\equiv 10 ($\em{mod}  $12)$, then
$\chi(\Z^{3},\sqrt{m})=3$;

\item If $m=2  (a^{2}+b^{2}+ab)$, $a,b\in \Z$, then
$\chi(\Z^{3},\sqrt{m})=4$;

\item $\chi(\Z^{3},\sqrt{m})=\chi(\Z^{3},2\sqrt{m})$.

\end{enumerate}

\end{thm}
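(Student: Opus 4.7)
The plan is to derive each of the five items from material already proved in this section, so the argument reduces to bookkeeping plus one nontrivial construction. Items (3) and (4) are verbatim restatements of two theorems proved above and require no further work. Item (5) is the observation recorded at the start of the section: any representation $x^2+y^2+z^2=4m$ forces all three summands to be even, because $x^2+y^2+z^2\equiv 0\pmod 4$ admits no other residue pattern, so the distance-$2\sqrt{m}$ graph on $\Z^3$ decomposes as eight cosets of $(2\Z)^3$, each isomorphic to the distance-$\sqrt{m}$ graph on $\Z^3$, giving the equality of chromatic numbers.

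For item (1) the upper bound $\chi\le 2$ for odd $m$ is the immediate parity argument: in any integer solution of $x^2+y^2+z^2=m$, an odd number of summands must be odd, hence $x+y+z$ is odd, so coloring lattice points by the parity of their coordinate sum is a proper $2$-coloring. The matching lower bound (that even $m$ forces $\chi\ge 3$) is part of item (2). For item (2), I would first iterate (5) until $m\equiv 2\pmod 4$, at which point the universal $4$-coloring theorem proved above gives $\chi\le 4$. For $\chi\ge 3$, the plan is to exhibit three lattice points pairwise at distance $\sqrt{m}$, i.e.\ an equilateral triangle; such a triple is equivalent to producing two vectors $v_1,v_2\in\Z^3$ with $|v_1|^2=|v_2|^2=m$ and $\langle v_1,v_2\rangle=-m/2$, since then $\{0,v_1,-v_2\}$ is the desired triangle and its presence rules out any proper $2$-coloring.

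The main obstacle is precisely this triangle construction: one must verify that for every even $m$ admitting at least one representation as a sum of three squares, the coupled linear-plus-quadratic system for $v_2$ has an integer solution. The natural approach is to start from a given representation $m=x^2+y^2+z^2$ (which, since $m\equiv 2\pmod 4$, necessarily has exactly two odd coordinates), intersect the hyperplane $\langle v,(x,y,z)\rangle=-m/2$ with the sphere $|v|^2=m$, and produce an integer point on the resulting circle by a parametric search or a Hasse-principle style argument. Once that step is complete, every item of the theorem either restates a previously proved result or follows by the parity / coset argument above, and the compilation is immediate.
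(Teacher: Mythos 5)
Items (1), (3), (4), (5) and the upper bound in (2) are handled correctly and essentially as in the paper: (3) and (4) restate earlier theorems, (5) is the mod~$4$ coset argument, the parity colouring gives (1), and reduction to $m\equiv 2\pmod 4$ plus the universal $4$-colouring gives $\chi\le 4$. The problem is the lower bound $\chi\ge 3$ in item (2), which you correctly identify as ``the main obstacle'' but for which your proposed route cannot be completed: for many even $m$ there is \emph{no} equilateral triangle of side $\sqrt{m}$ in $\Z^{3}$, so no Hasse-principle or parametric search will produce one. Concretely, take $m=10$. Every vector of norm $10$ is a signed permutation of $(3,1,0)$, and the inner product of two such vectors lies in $\{0,\pm1,\pm3,\pm6,\pm8,\pm9,\pm10\}$; the value $5=m/2$ required for your triangle never occurs. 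Yet $\chi(\Z^{3},\sqrt{10})=3$ by item (3), so the distance graph is non-bipartite while being triangle-free. In fact equilateral lattice triangles of side $\sqrt{2k}$ exist essentially only when $k=a^{2}+ab+b^{2}$, which is precisely the regime of item (4) where the spindle already gives $\chi=4$; for the remaining even $m$ your construction is vacuous. The paper's proof of item (2) instead exhibits an odd \emph{closed walk}: using the chain lemma (Lemma \ref{fundlemma} and the accompanying exercise), one connects the origin to $(0,1,1)$, $(1,0,1)$, $(1,1,0)$ by chains whose steps are isometric images of a primitive representing vector $(a,b,c)$ of $m=2p$, and a parity analysis of the chain lengths forces an odd cycle, hence non-$2$-colourability. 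Some replacement of this kind (an odd cycle longer than $3$) is unavoidable, so your outline is missing the essential idea rather than a routine verification.

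A secondary issue: ``iterate (5) until $m\equiv 2\pmod 4$'' does not always succeed. For $m=12$ the iteration lands on $3$, which is odd, and indeed $\chi(\Z^{3},\sqrt{12})=\chi(\Z^{3},\sqrt{3})=2$, contradicting the literal statement of item (2) for even $m$. The theorem implicitly restricts to even $m$ whose $4$-free part is $\equiv 2\pmod 4$ (as announced at the start of the section, and as reflected in the paper's reduction to $m=2p$); your write-up should make that restriction explicit rather than assume the iteration terminates in the residue class $2\pmod 4$.
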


The only statement of the above theorem, we haven't yet proved, is
2. We prove it in several steps.

a) It suffices to prove it for $m=2p$ for prime $p$.

b) Let $m=2p=a^{2}+b^{2}+c^{2}$ for $CGD(a,b,c)=1$.

c) From exercise on page \pageref{exrf}, it follows that there is a
chain in $\Z^{3}$ from the origin to $(0,1,1)$ with every two
adjacent nodes at distance $\sqrt{m}$.

d) If there is a chain of odd length $l$, then we easily construct
analogous chains from the origin to $(1,0,1)$ and from $(1,0,1)$ to
$(0,1,1)$ which leads to a closed chain of length $3l$ which
contradicts $2$-colourability.

e) Assume the chain from c) has even length. Then, there is a chain
in $\Z^{3}$ of even length (with distance $\sqrt{m}$ between two
adjacent points) from the origin to every point with even sum of
coordinates. In particular, there is a chain of even length from the
origin to $(a+1,b,c+1)$. Thus, there is a chain of odd length from
the origin to $(1,0,1)$. From d) we get a contradiction with
$2$-colourability.

The theorem is proved.

The first critical distance which does not fit into the list above
is $\sqrt{30}$.

\begin{cj}
There are no other examples of the chromatic number $3$, in other
words, $\chi(\Z^{3},\sqrt{m})=3$ only for those  $m$ which can be
represented in the form $2^{2k}\cdot l$, where $l\equiv 10 \mbox {
mod } 12$.\label{cjcj}
\end{cj}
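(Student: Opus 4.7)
The plan is to use the preceding theorem to reduce the conjecture to a narrow residual case. By item 5, $\chi(\Z^{3},\sqrt{m})=\chi(\Z^{3},\sqrt{4m})$, so we may assume $4\nmid m$; by item 1, $m$ must be even; and by item 3 the case $m\equiv 10\pmod{12}$ is already settled with chromatic number $3$. Hence only $m\equiv 2\pmod{12}$ and $m\equiv 6\pmod{12}$ remain, and for these we must establish $\chi(\Z^{3},\sqrt{m})=4$. Item 4 disposes of all such $m=2(a^{2}+ab+b^{2})$, and since $n=a^{2}+ab+b^{2}$ is the norm form of the Eisenstein integers, the covered $n$ are precisely those in whose prime factorisation every $p\equiv 2\pmod{3}$ appears to an even power. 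Thus the residual cases are those $m\equiv 2,6\pmod{12}$ for which $m/2$ has some prime $p\equiv 5\pmod{6}$ to an odd power; the smallest is $m=30$ (with $m/2=15=3\cdot 5$), already flagged in the paper, followed by $66,90,102,110,\ldots$.

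For these residual $m$ I would extend the spindle argument as follows. For each primitive representation $m=x^{2}+y^{2}+z^{2}$, the vectors obtained from $(x,y,z)$ by permutations and sign changes all realise the forbidden distance $\sqrt{m}$. One then seeks two pairs of unit-distance simplices sharing a common vertex $A$, with free vertices $D,D'$ such that $D-D'$ is a primitive vector with even coordinate sum. Lemma \ref{fundlemma} then transports a bichromatic pair onto the image of $(D,D')$ and yields a contradiction to $3$-colourability. For $m=30$ the only primitive triple is $(1,2,5)$, giving $48$ vectors of length $\sqrt{30}$ among which one would search for a suitable spindle by computer; for larger residual $m$ one has more representations and correspondingly more flexibility.

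The main obstacle is the absence of the cyclic structure $(a,b,a+b)\mapsto(-b,a+b,a)\mapsto\ldots$ that made the Eisenstein case tractable: for generic residual $m$ the set of primitive $\sqrt{m}$-vectors has no convenient symmetry, and the primitivity of $D-D'$ (required by Lemma \ref{fundlemma}) is a delicate arithmetic condition that has to be verified case by case. For $m$ with very few primitive sum-of-three-squares representations, or with $m/2$ divisible by several bad primes, a suitable spindle may fail to exist among the available vectors, and a different combinatorial argument would be required. A complementary strategy, likely necessary in the hardest cases, is to show that any hypothetical $3$-colouring of $\Z^{3}$ with forbidden distance $\sqrt{m}$ must descend to an affine colouring modulo a small integer, mimicking the rigidity exploited in the $m\equiv 10\pmod{12}$ construction with modulus $6$, and then derive a direct arithmetic contradiction from the resulting rigidity.
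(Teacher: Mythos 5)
This statement is labelled as a \emph{Conjecture} in the paper, and the paper offers no proof of it --- it is posed as an open problem (and reiterated in the concluding section), so there is no argument of the author's to compare yours against. Your reduction is correct and matches the paper's implicit framing: items 1, 2, 3 and 5 of the preceding theorem reduce the conjecture to showing $\chi(\Z^{3},\sqrt{m})=4$ for $m\equiv 2,6\pmod{12}$, item 4 handles those $m$ with $m/2$ an Eisenstein norm, and the first residual case is indeed $m=30$, exactly the value the paper flags as ``the first critical distance which does not fit into the list above.'' Up to that point your proposal is a faithful and useful organisation of what is known.

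Beyond that point, however, the proposal is a research plan rather than a proof, and its main instrument provably fails at the very first residual case. The generalised (Kupavskii) spindle requires two equilateral triangles of side equal to the critical distance sharing an edge. For $m=30$ every vector of squared length $30$ in $\Z^{3}$ is a signed permutation of $(1,2,5)$, and a direct check of all relative permutations and sign patterns shows that no two such vectors have inner product $15$: the attainable values are $\pm\{30,28,22,20\}$, $\pm\{29,25,21\}$, $\pm\{14,6,4\}$, $\pm\{21,19,1\}$ and $\pm\{17,13,7,3\}$. Hence $\Z^{3}$ contains no equilateral triangle of side $\sqrt{30}$ at all, the distance-$\sqrt{30}$ graph on $\Z^{3}$ is triangle-free, and no spindle (computer-assisted or otherwise) can be built from these $48$ vectors. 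So the obstacle you describe as a possibility for ``the hardest cases'' already kills the method at $m=30$, and your fallback --- showing that any $3$-colouring must descend to an affine colouring modulo a small integer --- is not an argument but a restatement of the rigidity one would need to establish. The conjecture remains open, and your proposal, while a sound reduction, does not close it.
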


Now, let us pass to the dimensions 4 and 5.

\begin{thm}

We have:

\begin{enumerate}

\item (A.B.Kupavskii)
For $k=4 l+2,l\in \Z$, one has $\chi(\Z^{4},\sqrt{k})\le
4,\chi(\Z^{5},\sqrt{k})\le 8$.

\item $\chi(\Z^{4},\sqrt{8 k})=\chi(\Z^{4},\sqrt{2k})$.

\item $\chi(\Z^{4},\sqrt{4 l})\le 4$ for odd $l$.

\end{enumerate}
\label{t1k1}
\end{thm}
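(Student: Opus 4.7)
The plan is to treat the three parts separately, since each rests on a residue analysis of $\sum d_i^2$ modulo a small power of $2$.

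\textbf{For Part 1} (Kupavskii's upper bounds) the starting observation is that, for any displacement $d$ with $\sum d_i^2 = 4l+2$, the number $j$ of odd coordinates of $d$ satisfies $j \equiv 2 \pmod 4$, because $d_i^2 \equiv 0$ or $1 \pmod 4$ according as $d_i$ is even or odd. In $\Z^n$ with $n\le 5$ this forces $j=2$. I would then look for a coloring of the form $\phi(x)=\sum_i x_i v_i \bmod 2$ with $v_i\in(\Z/2)^r$, so that $\phi(d)\equiv v_i+v_j\pmod 2$ on the two odd positions $i\ne j$. One just needs the $v_i$'s pairwise distinct. For $n=4$ take the four elements of $(\Z/2)^2$ for $v_1,\dots,v_4$ (this gives $4$ colours); for $n=5$ take any $5$ distinct elements of $(\Z/2)^3$ (this gives $8$ colours).

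\textbf{For Part 2} I would refine the analysis modulo $8$. Using $d_i^2\equiv 1,4,0\pmod 8$ according as $d_i$ is odd, $\equiv 2 \pmod 4$, or $\equiv 0\pmod 4$, one gets $\sum d_i^2 \equiv j+4s\pmod 8$, where $j$ counts odd $d_i$ and $s$ counts $d_i\equiv 2\pmod 4$. For the sum to be $\equiv 0\pmod 8$ with $j\le 4$, the only option is $j=0$ and $s$ even, so \emph{every} displacement of length $\sqrt{8k}$ lies in $(2\Z)^4$. Hence the $16$ cosets of $(2\Z)^4$ in $\Z^4$ form independent pieces of the distance graph, and
\[
\chi(\Z^4,\sqrt{8k}) \;=\; \chi((2\Z)^4,\sqrt{8k}) \;=\; \chi(\Z^4,\sqrt{2k})
\]
(the last equality by scaling). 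The reverse inequality $\chi(\Z^4,\sqrt{8k})\ge \chi(\Z^4,\sqrt{2k})$ is immediate by restricting any colouring to $(2\Z)^4$.

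\textbf{For Part 3} the same mod-$8$ identity $\sum d_i^2 \equiv j+4s\pmod 8$ with right-hand side $4l\equiv 4\pmod 8$ leaves two possibilities: either $j=0$ with $s$ odd, or $j=4$ with $s=0$. In other words, a displacement of length $\sqrt{4l}$ either has all coordinates even (staying inside the current coset of $(2\Z)^4$) or all coordinates odd (flipping parity in every coordinate, hence moving to the antipodal coset). The $16$ cosets of $(2\Z)^4$ therefore split into $8$ antipodal pairs $\{t,\bar t\}$, and no distance-$\sqrt{4l}$ edge crosses from one pair to another. Inside a single coset, rescaling by $2$ turns the forbidden distance into $\sqrt{l}$ on $\Z^4$, which admits a $2$-colouring by the parity of the coordinate sum (since $l$ is odd). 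So use $2$ colours on $t$ and $2$ new colours on $\bar t$, giving $4$ colours per pair, and reuse the same $4$ colours across all $8$ pairs.

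The main obstacle, I expect, is the mod-$8$ bookkeeping in Parts 2 and 3: one has to be careful that the case $j=2$ really is excluded, because that is what decouples the cosets and makes the whole argument go through. Once this is in hand, Part 3 requires the extra observation that the surviving possibilities $j=0$ and $j=4$ pair the cosets antipodally, which is the key structural point that makes $4$ colours suffice rather than $8$.
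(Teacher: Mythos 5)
Your proposal is correct and follows essentially the same route as the paper: the mod~$4$/mod~$8$ analysis of which parity patterns can realise the given squared distances (exactly two odd coordinates for $4l+2$; none for $8k$, which decouples the cosets of $(2\Z)^4$ and reduces to $\sqrt{2k}$ by scaling; all-even or all-odd for $4l$). Your colourings are just repackagings of the paper's: the distinct vectors $v_i\in(\Z/2)^r$ reproduce the paper's colouring of the parity cube, and the ``two colours on $t$, two new colours on $\bar t$'' scheme is exactly the paper's pair of residues (parity of $x_1$, parity of $\sum_i[\frac{x_i}{2}]$).
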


\begin{proof}
To prove the first statement, it suffices to colour the unit cube
$\{0,1\}^{4}$ (resp., $\{0,1\}^{5}$) corresponding to the parities
of the coordinates. Indeed, if two points in $\Z^{4}$ (resp.,
$\Z^{5})$ have the same parity for all coordinates, then the square
of the distance between these points is divisible by four. Besides,
it suffices to colour only one half of the cube with the sum of
coordinates being even (the ``odd'' part of the cube is coloured
similarly). So, in $\Z^{4}$ (in fact, in $\{0,1\}^{4}$) we colour
$8$ points $(a,b,c,d), a,b,c,d\in \Z_{2},a+b+c+d\equiv 0$ modulo
$2$,  with four colours in such a way that every two opposite points
$(x,y,z,t)$ and $(1-x,1-y,1-z,1-t)$ have the same colour. For the
$5$-dimensional case, it suffices to use this four-colouring for the
first four coordinates and add an independent colour representing
the parity of the fifth coordinate: in total, we get an
$8$-colouring.

The second result follows from the fact that the sum of squares of
four integer numbers, at least one of which is odd, is never
divisible by $8$, so, the problem is reduced to the case when all
coordinates are even.

The upper bound in the third case can be obtained as follows. For
the colouring we take two modulo two residue classes. The first
class is equal to the parity of the first coordinate. The second one
is equal to the sum of parities of $[\frac{x_i}{2}]$ over all
$i=1,2,3,4$.
\end{proof}

\begin{rk}
First note that the estimate given above give a {\em universal
covering} for all forbidden distances of one of three types listed
in the formulation of the theorem. The above proof of the first
statement can be generalized for higher dimensions. We shall
consider the question of upper bounds for $\chi(\Z^{n},4k+2)$ in a
separate paper.
\end{rk}

\section{For every $m$ the growth of $\chi(\Z^{n},\sqrt{2m})$ is
polynomial in $n$ of degree at most $m$}

It is well known, see, e.g., \cite{Rai}, that for rational spaces
the lower estimates for the chromatic number grow exponentially as
the dimension tends to infinity. Below we prove that in the case of
integer lattices it is never so for any concrete forbidden distance.

Let us start with a well known theorem about integer lattices, see,
e.g., \cite{BP}. The colouring by a scalar product will be used for
the proof of the main theorem.

The following theorem is well known.
\begin{thm}
The growth of $\chi(\Z^{n},\sqrt{2})$ is linear as $n$ tends to
infinity.
\end{thm}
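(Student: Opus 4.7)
The plan is to prove matching linear bounds. For the lower bound, I would exhibit a clique of size $n$ in the distance-$\sqrt{2}$ graph on $\Z^{n}$. The standard basis vectors $e_{1},\dots,e_{n}$ work: for any $i\neq j$, $\|e_{i}-e_{j}\|_{2}=\sqrt{2}$, so these $n$ points are pairwise adjacent and hence must receive pairwise distinct colours. This gives $\chi(\Z^{n},\sqrt{2})\ge n$.

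For the upper bound, I would use the scalar-product colouring hinted at in the paragraph preceding the statement. Two lattice points are at distance $\sqrt{2}$ if and only if their difference is of the form $\pm e_{i}\pm e_{j}$ with $i\neq j$. So if one defines a linear functional $f(x_{1},\dots,x_{n})=\sum_{i=1}^{n}c_{i}x_{i}$ and reduces modulo an integer $M$, the resulting map is a proper colouring as long as $\pm c_{i}\pm c_{j}\not\equiv 0\pmod{M}$ for all $i\neq j$ in $\{1,\dots,n\}$. The choice $c_{i}=i$, $M=2n+1$ works: for distinct $i,j\in\{1,\dots,n\}$ one has $|c_{i}-c_{j}|\in\{1,\dots,n-1\}$ and $c_{i}+c_{j}\in\{3,\dots,2n-1\}$, both ranges lying strictly between $0$ and $M=2n+1$. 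Hence $\chi(\Z^{n},\sqrt{2})\le 2n+1$.

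Combining the two bounds gives $n\le\chi(\Z^{n},\sqrt{2})\le 2n+1$, which is linear in $n$. There is no real obstacle here: the only choice to be made is that of the coefficients $(c_{i})$ and the modulus $M$, and the choice $c_{i}=i$, $M=2n+1$ is arguably the simplest one that separates the forbidden differences. The same scalar-product template — with more carefully chosen coefficients and a larger modulus — is what will later support the polynomial upper bound for $\chi(\Z^{n},\sqrt{2m})$.
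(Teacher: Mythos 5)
Your proposal is correct and follows essentially the same route as the paper: a set of unit vectors for the linear lower bound, and a scalar-product colouring reduced modulo a linear-sized integer (the paper uses $v=(1,3,\dots,2n-1)$ modulo $4n-2$, you use $c_i=i$ modulo $2n+1$) for the upper bound. The minor differences in constants are immaterial to the claim of linear growth.
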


\begin{proof}
To get a lower bound (see \cite{FK}), let us take  the set of points
in $\Z^{n}$ with exactly one non-zero coordinate equal
 to $\pm 1$  and the other coordinates all equal to zero, then it can not be coloured with less than $n$ colours for any $n\ge 2$.

The upper estimate is established by the following colouring. In
$\Z^{n}$, let us consider the following vector: $v=(1,3,5,7,\dots,
2n-1)$. For every  $u\in \Z^{n}$, let us consider the scalar product
$\langle u,v\rangle$. It is clear that if two integer points
$u_{1},u_{2}$ are at distance  $\sqrt{2}$, we have $\langle
u_{1},v\rangle\neq \langle u_{2},v\rangle$. More precisely, the
difference of values of $\langle \cdot,v\rangle$ is an even number
whose absolute value is between $2$ and $4n-4$. Thus, if we take the
residue class of this scalar product modulo $4n-2$ for the
colouring, we get a proper $(2n-1)$-colouring.
\end{proof}

The idea of colouring by using scalar products taken modulo some
large integer will later be used in some more complicated
situations. In particular, it will be used for our main result, the
polynomial upper bounds for the chromatic number of integer lattices
with a fixed critical distance.

\begin{thm} For every fixed $m$ the upper estimate for $\chi(\Z^{n},\sqrt{2m})$  in any norm $l_{\alpha}$ is polynomial
in $n$ of degree at most $m$. \label{keythn}
\end{thm}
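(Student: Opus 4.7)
The plan is to extend the scalar-product-modulo-large-integer colouring from the $\sqrt{2}$ case to general $\sqrt{2m}$, focusing on $l_{2}$; the other $l_{\alpha}$-norms follow by the same scheme with the appropriate support and entry bounds. First I observe that an integer vector $d\in\Z^{n}$ with $\|d\|_{2}=\sqrt{2m}$ has $|\mathrm{supp}(d)|\le 2m$ and $|d_{i}|\le\sqrt{2m}$, so by Cauchy--Schwarz $|\sum_{i}d_{i}|\le\|d\|_{1}\le 2m$. Thus the forbidden difference set $\mathcal{D}$ is a finite union of ``shape types'' distributed over $O(n^{2m})$ positional placements in $[n]$.

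I would use a two-coordinate colouring $c(u)=(c_{0}(u),c_{1}(u))$ with $c_{0}(u)=\sum_{i}u_{i}\bmod(2m+1)$ and $c_{1}(u)=\langle u,v\rangle\bmod P$, where $v\in\Z_{>0}^{n}$ has entries in $[1,N]$ with $N=O(n^{m})$ and $P$ is a prime of the same order. The first coordinate separates every $d\in\mathcal{D}$ with $\sum_{i}d_{i}\ne 0$, because $|\sum_{i}d_{i}|\in\{1,\dots,2m\}$ is never a nonzero multiple of $2m+1$. For the ``balanced'' case $\sum_{i}d_{i}=0$ the second coordinate must do the work: such a $d$ gives rise to a signed multiset sum involving at most $m$ positive and $m$ negative copies of the $v_{i}$'s (with multiplicities $\le\sqrt{2m}$), so I need $\langle d,v\rangle\ne 0$. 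I would choose $v$ to be a multiset $B_{m}^{+}$ Sidon set, and a Bose--Chowla style construction supplies $n$ such elements in $[1,O(n^{m})]$; then $|\langle d,v\rangle|\le\|d\|_{1}\cdot N\le 2m\cdot N=O(n^{m})$ fits in a single period of $P$, so $c_{1}(d)\ne 0$. Altogether the palette has $(2m+1)\cdot P=O(n^{m})$ colours.

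The hard part will be the arithmetic of $v$: classical subset-type $B_{m}^{+}$ Sidon sets do not automatically forbid multiset relations like $2v_{i}=v_{j}+v_{k}$, which arise from difference profiles with an entry of magnitude $\ge 2$ (for example $(2,-1,-1,0,\dots)$ at distance $\sqrt{6}$). To rule these out I would either pass to a Behrend-type $3$-AP-free refinement of the Bose--Chowla set, or partition $\mathcal{D}$ according to the multiplicity pattern of $|d_{i}|$ and verify the required Sidon-type nondegeneracy on each profile class separately. The extension to general $l_{\alpha}$ follows along the same lines, using the $\alpha$-dependent support bound $|\mathrm{supp}(d)|\le(2m)^{\alpha/2}$ and adjusting the Sidon order accordingly, so that only the multiplicative constant depends on $\alpha$.
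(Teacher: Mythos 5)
Your colouring is essentially the paper's: both arguments colour $\Z^{n}$ by a scalar product with a vector whose $n$ entries form a Sidon-type set of order $m$ inside an abelian group of size $O(n^{m})$, using the fact that a difference vector $d$ with $\|d\|_{2}^{2}=2m$ has $\|d\|_{1}\le 2m$, hence at most $m$ positive against at most $m$ negative unit contributions. The only real differences are in the bookkeeping. For the unbalanced differences ($\sum_{i}d_{i}\neq 0$) the paper shifts its Sidon set by a large integer so that the ratio of the smallest to the largest element exceeds $\frac{m-2}{m}$, which kills every relation with nonzero coefficient sum; your auxiliary colour coordinate $\sum_{i}u_{i}\bmod (2m+1)$ achieves the same thing more transparently, since $0<|\sum_i d_i|\le 2m$. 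And where the paper imports the existence of the required set from the Sidon-set literature \cite{OB,Ruzsa1,Ruzsa2}, you invoke Bose--Chowla explicitly, which is a perfectly good (indeed more concrete) substitute and gives the same group size $O(n^{m})$, hence the same degree-$m$ polynomial bound.

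Two cautions. The step you call ``the hard part'' is not actually a gap once you use the right notion of Sidon set: Bose--Chowla produces $B_{m}$ sets in the multiset sense (all sums of $m$ elements \emph{with repetition} are distinct), and any balanced relation $\sum_{i}d_{i}v_{i}=0$ whose positive part has size $t\le m$ becomes, after padding both sides with the same $m-t$ elements, an equality of two distinct $m$-element multisets --- exactly the ``non-trivial solution'' of the $2m$-term equation $a_{1}+\dots+a_{m}-a_{m+1}-\dots-a_{2m}=0$ that the paper forbids. So no Behrend-type refinement is needed; moreover $3$-AP-freeness alone would not dispose of patterns such as $3v_{i}=v_{j}+v_{k}+v_{l}$, so of your two fallback options only the multiplicity-profile (padding) argument is the one to keep, and it should be stated rather than deferred. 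Second, your closing remark on general $l_{\alpha}$ overclaims: for $\alpha>2$ a difference at $l_{\alpha}$-distance $\sqrt{2m}$ can have $l_{1}$-norm as large as $(2m)^{\alpha/2}$, so ``adjusting the Sidon order accordingly'' raises the exponent of the polynomial, not merely the multiplicative constant. (The paper's own one-sentence dismissal of the general $l_{\alpha}$ case is open to the same objection, but you should not assert that only the constant changes.)
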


Before proving this general estimate which relies on some deep
additive combinatorics, we shall give an explicit colouring for the
following partial case.

\begin{st}
$\chi(\Z^{n},2)$ grows quadratically as $n\to
\infty$.\label{forfour}
\end{st}

Let us prove the upper estimate. The lower estimate is in fact well
known and  will be proved later.

Let $n$ be an integer number. Let $p$ be a prime such that $p\le
n\le 2p$.

We shall prove the quadratic upper bound for prime $p$ which
obviously yields the quadratic upper bound as $n\to \infty$.
 Consider the set $(k,a^{k} \mbox{ mod } p)$ of $p$ elements from the
abelian group $S=\{0,\dots,p-1\}\times \{0,\dots, p-1\}$:
 where $k$ runs over $\{1,\dots, p-1\}$
and $a$ is a primitive $(p-1)$-th root of unity in $\Z_{p}$.

It can be easily seen that for any four distinct elements $a,b,c,d$
from the subset described above we have $a-b\neq c-d$. Indeed, if
for some $e,f,g,h\in \Z_{p}$ we have $f-e=h-g$ and $h\neq f,e\neq f$
we see that $a^{f}-a^{e}$ differs from $a^{h}-a^{g}$ by
multiplication by $a^{f-h}$.

We have constructed a set (abelian group) with no solution to
$a-b=c-d$ for distinct $a,b,c,d$. Now, we shall modify this set a
little bit to get rid of solutions of some simpler equations.

Now, take the set $S'\subset \Z\times \Z$ of integer numbers
$(4k-3,a^{k} \mbox{ mod
 } p)$ where $a^{k}\;\mbox{ mod }\; p$ is treated as an integer between $0$ and $p-1$
 (we use the inclusion $\Z_{p}\subset \Z$).

\begin{lem}
For every four distinct elements $a,b,c,d \in S'$ we have:

\begin{enumerate}
\item none of the sums $\pm a\pm b\pm c\pm d$ is equal to zero.

\item the absolute value of the first coordinate of the sum $\pm a\pm b\pm
c\pm d$ does not exceed $16 p$, and the absolute value of the second
coordinate does not exceed $4p$.
\end{enumerate}\label{lmch}
\end{lem}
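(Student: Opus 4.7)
The plan is to handle item 2 by direct size bounds and to split item 1 into three cases according to the sign pattern. Write each of the four chosen elements of $S'$ as $v_{k_i}=(4k_i-3,\alpha^{k_i}\bmod p)$ for distinct $k_1,\dots,k_4\in\{1,\dots,p-1\}$, where $\alpha$ denotes the primitive root used in the construction. Item 2 is immediate: the first coordinates belong to $\{1,5,\dots,4p-7\}$, so the absolute value of any signed sum of four of them is at most $4(4p-7)<16p$; the second coordinates lie in $\{0,\dots,p-1\}$, so a signed sum of four is bounded by $4p$.

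For item 1 I would split the sixteen possible sign vectors $(\epsilon_1,\epsilon_2,\epsilon_3,\epsilon_4)\in\{\pm 1\}^4$ into three regimes. In the case $4+0$ (all signs equal), the first coordinate of $\sum_i\epsilon_i v_{k_i}$ equals $\pm(4(k_1+k_2+k_3+k_4)-12)$, which cannot vanish since four distinct positive integers sum to at least $1+2+3+4=10$. In the case $3+1$ (three signs equal, one opposite), the observation $4k-3\equiv 1\pmod 4$ gives $\sum_i\epsilon_i(4k_i-3)\equiv\pm 2\pmod 4$, so the first coordinate is again nonzero.

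The substantive case is $2+2$, and this is where I would invoke the Sidon property of the set $\{(k,\alpha^k\bmod p):k=1,\dots,p-1\}\subset\Z_p\times\Z_p$ established immediately before the lemma. Such a vanishing sign pattern is exactly an identity $v_{k_i}+v_{k_j}=v_{k_l}+v_{k_m}$ in $\Z\times\Z$ for some partition of the four indices into two pairs. Reducing both coordinates modulo $p$ (and using that $p>2$ to cancel the factor $4$ appearing in the first coordinate) turns this into $(k_i+k_j,\alpha^{k_i}+\alpha^{k_j})\equiv(k_l+k_m,\alpha^{k_l}+\alpha^{k_m})\pmod p$; the Sidon property then forces $\{k_i,k_j\}=\{k_l,k_m\}$, contradicting the distinctness of the four indices. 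The purpose of the twist $k\mapsto 4k-3$ is precisely to reserve the Sidon input for the $2+2$ case, while the $4+0$ and $3+1$ cases are dispatched by elementary size and divisibility considerations on the first coordinate. I do not anticipate any real obstacle beyond the three-case bookkeeping.
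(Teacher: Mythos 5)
Your proof is correct and follows essentially the same route as the paper: the $2{+}2$ sign pattern is ruled out by the Sidon property of $\{(k,\alpha^k)\}\subset\Z_p\times\Z_p$ established just before the lemma, the all-equal-signs case by positivity of the first coordinates, and the $3{+}1$ case by the congruence $4k-3\equiv 1\pmod 4$. Your write-up merely spells out the bookkeeping (and the harmless reduction mod $p$ in the $2{+}2$ case) that the paper leaves implicit.
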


\begin{proof}
The second statement is evident.

We have proved that $a-b=c-d$ for $a,b,c,d \in S'$  implies $a=c$ or
$a=b$. The equation $a+b+c+d=0$ has no solutions because of
positivity of $a,b,c,d$, and $a+b+c-d\neq 0$ follows from a modulo
$4$ argument.
\end{proof}

Now, considering $S'$ as a subset of the abelian group
$S''=\Z_{16p+1}\times \Z_{4p+1}$, we see that for every four
distinct elements $a,b,c,d \in S'\subset S''$ we have $\pm a\pm b\pm
c\pm d\neq 0 \in S''$. We shall call these $p$ elements constituting
the subset $S'\subset S''$ {\em the distinguished elements} of
$S''$. Denote these distinguished elements from $S''$ by
$q_{1},\dots, q_{p}$. They form a vector which will be used to
construct the desired colouring.

Now, for each vector $x=(x_{1},\dots, x_{p})\in \Z^{p}$ let $x'_{j}$
be the mod $p$ residue class of $x_{j}$ considered as an integer.
With $x$ we associate the element (colour) $f(x)=\sum x'_{j}\cdot
q_{j}\in
 S''$ of the group $S''$.

\begin{lem}
If the distance between two points $x,{\tilde x}$ is equal to $2$
then $f(x)\neq f({\tilde x})$ in $S''$.
\end{lem}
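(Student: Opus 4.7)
The plan is to expand $f(x) - f(\tilde x) = \sum_j (x'_j - \tilde x'_j)q_j$ in $S''$ and show that the resulting short integer combination of distinguished elements cannot cancel, using the Sidon-type property of Lemma \ref{lmch} together with the sizes of the moduli defining $S''$.

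Since $\|x - \tilde x\| = 2$ in $\Z^p$, the difference $v = x - \tilde x$ satisfies $\sum v_j^2 = 4$, so either (A) a single coordinate has $v_j = \pm 2$ with the rest vanishing, or (B) exactly four coordinates $v_{i_1},\dots,v_{i_4}$ equal $\pm 1$ with the rest vanishing. The integer coefficients $c_j := x'_j - \tilde x'_j$ satisfy $c_j \equiv v_j \pmod p$ and $|c_j| \le p-1$; in particular $c_j = 0$ wherever $v_j = 0$, so the sum has at most one summand in case (A) and exactly four in case (B). In case (A), absent residue wraparound one gets $f(x) - f(\tilde x) = \pm 2 q_j$, whose first coordinate $\pm 2(4k-3)$ has absolute value at most $8p-14 < 16p+1$ and so is nonzero in $\Z_{16p+1}$. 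In case (B), again without wraparound, $f(x) - f(\tilde x) = \sum_l \epsilon_l q_{i_l}$ is a signed sum of four distinct distinguished elements; Lemma \ref{lmch} shows this is nonzero in $\Z \times \Z$, and since its coordinates are bounded by $16p$ and $4p$, strictly below the moduli of $S''$, nonzeroness is preserved.

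The subtle point, and the main obstacle, is the wraparound subcase where some $c_j$ takes one of the values $\pm(p-1)$ or $\pm(p-2)$ because $x'_j$ and $\tilde x'_j$ sit on opposite sides of the cyclic boundary $\{0,\dots,p-1\}$. For case (A) this means verifying $(p-2) q_j \ne 0$ in $S''$, which reduces to the gcd conditions $\gcd(16p+1, p-2) = \gcd(33, p-2) = 1$ and an analogous condition on $4p+1$, both of which hold for primes $p$ outside a fixed finite list of residue classes (harmless for the asymptotics, since a suitable $p \in [n,2n]$ always exists by Bertrand's postulate). For case (B) I would split each offending coefficient as $\pm(p-1) = \mp 1 \pm p$, so that $f(x) - f(\tilde x)$ decomposes as a small signed four-term combination (ruled out by Lemma \ref{lmch} as above) plus $p$ times a short combination of the same $q_{i_l}$'s, and then rule out cancellation between the two pieces by a parallel gcd argument. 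This wraparound bookkeeping is where the actual effort concentrates; the combinatorial backbone of the argument is entirely the no-four-term-relation property already built into $S'$.
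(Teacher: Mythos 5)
Your case analysis (one coordinate changed by $\pm 2$, or four coordinates changed by $\pm 1$) and your treatment of the non-wraparound situation coincide with the paper's proof, which consists exactly of those two observations: a single nonzero multiple $\pm 2q_j$ cannot vanish because its first coordinate is bounded by $8p-14<16p+1$, and a signed sum of four distinct distinguished elements cannot vanish by Lemma \ref{lmch} together with the bounds $16p<16p+1$, $4p<4p+1$. You went further and flagged the wraparound caused by the reduction $x_j\mapsto x'_j$ modulo $p$ --- a real feature of the colouring as literally defined, which the paper's two-sentence proof silently ignores. Your patch for case (A) is fine (since $\gcd(p-2,16p+1)=\gcd(p-2,33)$, choosing $p$ with $\gcd(p-2,33)=1$ makes $(p-2)(4k-3)\not\equiv 0 \pmod{16p+1}$ because $0<4k-3<16p+1$; only the constant in the quadratic bound is affected, though note that Bertrand alone does not give a prime in a prescribed residue class --- you need primes in arithmetic progressions here).

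The genuine gap is your case (B) with wraparound. Writing $f(x)-f(\tilde x)=A+pB$ with $A=\sum_l\epsilon_l q_{i_l}$ and $B=\sum_{l\in T}\delta_l q_{i_l}$, no ``parallel gcd argument'' can finish: $p$ is automatically invertible modulo $16p+1$ and modulo $4p+1$ (indeed $16p\equiv-1$ and $4p\equiv-1$), so $A+pB\equiv 0$ in the first coordinate is equivalent to $16A_1\equiv B_1\pmod{16p+1}$ and in the second to $4A_2\equiv B_2\pmod{4p+1}$. These are new Sidon-type constraints on $S'$ with coefficients of size up to $17$ (for instance $(p-1)(q_i-q_j)+q_k-q_l\equiv 0$ amounts to $17(k_i-k_j)\equiv 16(k_k-k_l)$ in the first coordinate), which Lemma \ref{lmch} does not supply and which are not ruled out by divisibility considerations alone; since the sums wrap around the moduli many times, there is no size obstruction either. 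So the hardest subcase of your argument is not proved, and it is not even clear that the lemma with the mod-$p$-reduced colouring holds for every $p$ and every primitive root $a$ without extra conditions. The clean way out --- evidently what the paper's proof is really proving --- is to drop the reduction modulo $p$ altogether and colour by the group homomorphism $f(x)=\sum_j x_j q_j\in S''$ (integer multiples in the finite group $S''$, whose order is still $O(n^2)$, so nothing is lost). Then the coefficient vector of $f(x)-f(\tilde x)$ is exactly $x-\tilde x$, only your two clean cases occur, and the lemma follows at once from Lemma \ref{lmch}; the wraparound bookkeeping that absorbs most of your effort never arises.
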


\begin{proof}
Indeed, since all elements $q_{i}$ are non-zero, two points having
all coordinates but one equal and one coordinate which differs by
two, get different colours. If two points have all coordinates but
four equal and in each of four coordinates the difference is $\pm
1$, these points have different colours by Lemma \ref{lmch}.
\end{proof}

Now, taking into account that $|S''|$ grows as $n^{2}$ as $n$ tends
to infinity, we get the claim of Statement \ref{forfour}.

Let us now return to the proof of Theorem \ref{keythn}. We shall
prove this theorem for the $l_{2}$-norm. The construction used in
the proof is actually the same for all norms $l_{\alpha}$.

We shall start with the lower estimate. Let ${\cal M}$ be a subset
of integers of cardinality $N=|{\cal M}|$. Fix an integer number
$m$. The following question has been studied by many authors, see,
e.g.\cite{OB},\cite{Ruzsa1,Ruzsa2} and references therein.

Which is the largest cardinality of the subset ${\cal M}'\subset
{\cal M}$ for which there are no non-trivial solutions of the
equation

\begin{equation}
a_{1}+\dots+a_{m}-a_{m+1}-\dots -a_{2m}=0, \label{sidon}
\end{equation}
where $a_{i},i=1,\dots, 2m\in {\cal M}$? What can one say when $N$
tends to infinity?

The answer of course depends on the definition of non-trivial
solution. We shall adopt the definition from \cite{OB} (sets with no
non-trivial solutions to similar linear equations are called {\em
Sidon sets}).

A solution to (\ref{sidon}) is said to be {\em trivial} if there are
exactly $l$ different elements among $a_{j}$, and if we fix one
concrete $a_{j}$ and take all $a_{k}$ not equal to any of $a_{j}$ to
be $0$, we shall still get a solution.

For example, for $m=4$, the solution $a_{1}=a_{3}=1, a_{2}=a_{4}=2$
is trivial, whence $a_{1}=0,a_{2}=2,a_{3}=a_{4}=1$ is not.

In \cite{OB} the following statement is proved
\begin{st}
There is an infinite sequence of abelian groups ${\cal M}_{N}$ and
their subsets ${\cal M}'_{N}$ such that there are no non-trivial
solution of (\ref{sidon}) for elements from these subsets and $n$
grows as $(1+o(1))N^{1/m}$ where $N$ and $n$ are cardinalities of
${\cal M}_{N}$ and ${\cal M}'_{N}$, respectively.
\end{st}

Note that if $N$ is large enough, we may assume $n>
N^{\frac{1}{m}}\cdot \frac{1}{2}$. Consequently, if we take a
specific $n$ large enough, then $N$ can be chosen to be not greater
than $2n ^{m}= O(n^{m})$.

Now notice that non-trivial solutions of (\ref{sidon}) can actually
serve as solutions of many other equations (\ref{gsidon}), see
below.

For example, if in some set ${\cal M}'\subset {\cal M}$ we have
three elements $a,b,c$ forming an arithmetic progression $c+a=2b$,
then this gives rise to a non-tirival solution of (\ref{sidon}): we
set $a_{1}=a,a_{2}=c,a_{3}=b,a_{4}=b$.

Moreover, we have the following obvious
\begin{lem}
Let $k<m$ and let $\alpha_{1},\dots,\alpha_{k}$ be a collection of
integer numbers, $\sum |\alpha_{i}|<m$ and $\sum \alpha_{i}=0$. Then
every solution to

\begin{equation}
\sum_{i=1}^{k} \alpha_{i} b_{i}=0\label{gsidon}
\end{equation}
gives rise to a solution of (\ref{sidon}).
\end{lem}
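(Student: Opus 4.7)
The plan is to convert a solution of $(\ref{gsidon})$ into a solution of $(\ref{sidon})$ by expanding each term $\alpha_{i}b_{i}$ into a list of copies of $b_{i}$, and then padding both sides with dummy summands so that each side has exactly $m$ terms.

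First I would partition the indices by sign: set $I^{+}=\{i:\alpha_{i}>0\}$ and $I^{-}=\{i:\alpha_{i}<0\}$, and let $P=\sum_{i\in I^{+}}\alpha_{i}$. The hypothesis $\sum\alpha_{i}=0$ immediately gives $\sum_{i\in I^{-}}(-\alpha_{i})=P$, and the bound $\sum|\alpha_{i}|<m$ (so $2P<m$) yields $P<m$. Rewriting $(\ref{gsidon})$ as
\begin{equation*}
\sum_{i\in I^{+}}\alpha_{i}b_{i}=\sum_{i\in I^{-}}(-\alpha_{i})b_{i}
\end{equation*}
and expanding each coefficient $\alpha_{i}$ into $|\alpha_{i}|$ repeated copies of $b_{i}$ produces an equality of two sums, each having exactly $P$ terms drawn from $\{b_{1},\dots,b_{k}\}$.

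To reach the precise shape of $(\ref{sidon})$, which requires $m$ summands on each side, I pick any element $c$ of the ambient set (say $c=b_{1}$) and append $m-P$ copies of $c$ to each side. Since the same quantity is added to both sides, the identity is preserved, and the resulting $2m$-tuple $(a_{1},\dots,a_{2m})$ satisfies $a_{1}+\dots+a_{m}-a_{m+1}-\dots-a_{2m}=0$, as required.

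The main subtlety, and the only step genuinely requiring attention, is non-triviality: the lemma is meant to let the Sidon property for $(\ref{sidon})$ rule out solutions of $(\ref{gsidon})$, so one needs a genuinely non-trivial solution of $(\ref{gsidon})$ to produce a genuinely non-trivial solution of $(\ref{sidon})$. Because the $m-P$ copies of $c$ are added identically to both sides, they contribute the same multiplicity to each side and so cancel when testing triviality; the constructed $(\ref{sidon})$-solution is trivial exactly when the two expanded multisets of the previous paragraph coincide, which in turn is exactly the condition saying that $(\ref{gsidon})$ reduced to a sum of trivial identities $\alpha_{i}b_{i}-\alpha_{i}b_{i}=0$. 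This bookkeeping, and not the construction itself, is the hard part.
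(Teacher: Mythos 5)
Your proof is correct and follows essentially the same route as the paper: split the coefficients by sign, expand each term $\alpha_{i}b_{i}$ into $|\alpha_{i}|$ repeated copies of $b_{i}$, and use $\sum|\alpha_{i}|<m$ to fit each side into the $m$ available slots of (\ref{sidon}). The only difference is the padding step --- the paper simply sets the remaining variables to $0$, whereas you append equal numbers of copies of $b_{1}$ to both sides --- and both choices work; your discussion of non-triviality actually goes beyond what the paper's own proof records.
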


\begin{proof}
Indeed, collect all positive $\alpha_{i}$ and all negative
$\alpha_{j}$ separately. Let $(b_{1},\dots, b_{k})$ be a solution to
(\ref{gsidon}). For every positive $\alpha_{i}$ we take $\alpha_{i}$
variables from $a_{1},\dots, a_{m}$ to be equal to $b_{i}$, and for
every negative $\alpha_{j}$ we take $-\alpha_{j}$ elements from
$a_{m+1},\dots, a_{2m}$ to be equal to $b_{j}$. It is possible to
choose coordinates of these elements all distinct because $\sum
|\alpha_{i}|<m$. We set the remaining coordinates $a_{k}$ to be $0$.
The claim follows.
\end{proof}

Now, we can modify the set ${\cal M}'$ as follows. Let ${\tilde
{\cal M}'}={\cal M}'+s$, where the addition of $s$ denotes the shift
by a large positive integer number. This number will  be chosen in
such a way that the ratio between the minimal element of ${\cal
M}'+s$ and the maximal element of ${\cal M}'+s$ is strictly greater
than $\frac{m-2}{m}$. We first treat ${\cal M}'$  as a subset of
$\N$. Of course, $s$ grows linearly with respect to $n$. This will
be needed to avoid solutions of equations (\ref{gsidon}) where the
sum of coefficients is non-zero. This leads to an extension ${\tilde
{\cal M}}$ of the group ${\cal M}$ which will be taken to be a
cyclic group $\Z_{2 f(s)}$ where $f(s)$ is larger than the absolute
value of the maximal element of ${\cal M}$ multiplied by $m+1$.

\begin{thm}
Let $\beta_{i}, i=1,\dots, k$ be coefficients such that $\sum
|b_{i}|$ is even and $\sum \beta_{i}\neq 0$. There are no nontrivial
solutions to (\ref{sidon}),(\ref{gsidon}) in ${\tilde {\cal M}}'$;
neither there are any solutions to any of the equations
\begin{equation}
\sum_{i=1}^{k} \beta_{i} c_{i}=0\label{ggsidon}.
\end{equation}
\end{thm}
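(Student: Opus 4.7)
The plan is to split the three claims by whether the coefficient sum of the linear relation vanishes, handling (\ref{sidon}) and (\ref{gsidon}) together and (\ref{ggsidon}) separately.

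For (\ref{sidon}) and (\ref{gsidon}), the coefficient sums are zero, so the corresponding linear forms are translation invariant. Consequently, if $(c_1,\ldots)$ is a solution in $\tilde{\cal M}'={\cal M}'+s$ viewed inside $\Z_{2f(s)}$, then $b_i:=c_i-s\in{\cal M}'$ satisfies the same equation modulo $2f(s)$. The choice $f(s)>(m+1)\max|{\cal M}|$ ensures that the integer-valued left-hand side has absolute value strictly less than $2f(s)$, so the congruence already holds in $\Z$. For (\ref{sidon}) this directly contradicts the defining property of ${\cal M}'$, and for (\ref{gsidon}) the preceding lemma converts any nontrivial solution into a nontrivial solution of (\ref{sidon}), again a contradiction.

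For (\ref{ggsidon}) with $\sum_i\beta_i\ne 0$, I would substitute $c_i=b_i+s$ with $b_i\in{\cal M}'$ and write
\begin{equation*}
\sum_i\beta_i c_i\;=\;\Bigl(\sum_i\beta_i\Bigr)s\;+\;\sum_i\beta_i b_i .
\end{equation*}
The first term is a nonzero integer multiple of $s$, so at least $s$ in absolute value, while the second is bounded by $\bigl(\sum|\beta_i|\bigr)\cdot L$ with $L=\max{\cal M}'$. The ratio condition $\min\tilde{\cal M}'/\max\tilde{\cal M}'>(m-2)/m$, which unwinds to $s>\tfrac{m-2}{2}L$, is calibrated so that for the coefficient ranges that actually arise (i.e., $\sum|\beta_i|$ controlled by $m$ via the squared-distance origin of the equation), the main term strictly dominates, leaving the integer sum nonzero. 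The parity hypothesis that $\sum|\beta_i|$ is even combined with $\sum\beta_i\ne 0$ in fact forces $|\sum\beta_i|\ge 2$, which is what produces enough slack in the dominance estimate. Finally $|\sum_i\beta_i c_i|<2f(s)$ by the choice of $f(s)$, so vanishing modulo $2f(s)$ is also ruled out.

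The main obstacle is quantitative: one must verify that the three chosen parameters -- the ratio $(m-2)/m$ used to shift, the modulus $2f(s)$, and the implicit cap on $\sum|\beta_i|$ furnished by the intended application in $\Z^n$ with squared distance $2m$ -- are compatible, so that the dominant term $\bigl(\sum_i\beta_i\bigr)s$ strictly outweighs the error $\sum_i\beta_i b_i$ in $\Z$ while the full sum still lies in a window where nonzero integers remain nonzero modulo $2f(s)$. Once this calibration is settled, the remainder of the argument reduces to invoking the Sidon-type hypothesis on ${\cal M}'$ and the preceding lemma.
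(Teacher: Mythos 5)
Your plan reconstructs exactly the argument the paper intends but never actually writes down: the paper's entire ``proof'' of this theorem is the one-sentence remark following it, that shifting ${\cal M}'$ by $s$ ``easily'' forbids the equations with nonzero coefficient sum. Your two-case split --- translation invariance plus the size of the modulus $2f(s)$ for the zero-sum equations (\ref{sidon}) and (\ref{gsidon}), and dominance of the main term $\bigl(\sum_i\beta_i\bigr)s$ over $\sum_i\beta_i b_i$ for (\ref{ggsidon}) --- is the right reading of the construction, and your observation that the parity hypothesis forces $\bigl|\sum_i\beta_i\bigr|\ge 2$ is precisely the point of the otherwise unmotivated condition that $\sum|\beta_i|$ be even. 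The calibration issue you flag is real and is not addressed in the paper either: writing $P$ and $N$ for the sums of the positive coefficients and of the absolute values of the negative ones, positivity of the shifted sum requires $(P-N)s>N\cdot\max{\cal M}'$, and since the coefficients arise from decompositions $\sum n_i^2=2m$ one can have $\sum|\beta_i|=2m$ with $N=m-1$ and $P-N=2$, in which case the stated ratio $(m-2)/m$ (equivalently $s>\tfrac{m-2}{2}\max{\cal M}'$) falls short by one; taking $s>\tfrac{m-1}{2}\max{\cal M}'$, or any ratio depending only on $m$ and closer to $1$, repairs this without affecting the $O(n^{m})$ colour count. So your proposal is structurally correct and more detailed than the paper; the one step you leave open is a genuine but fixable defect of the paper's own parameter choice rather than of your reasoning.
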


In other words, having constructed a group and its subset with no
solutions of (\ref{sidon}) and (\ref{gsidon}) with the sum of
coefficients equal to zero, we can easily forbid solutions to all
equations where the sum of coefficients is not equal to zero just by
shifting this set by some function $\mu(m)$ which does not depend on
$n$.

\def\MM{\tilde {\cal M}}
\def\MMM{\tilde {\cal M'}}

Now we are ready to prove the main theorem. First note that any
decomposition of an even $n$ into sum of squares of integer numbers
is a set of numbers which can serve as coefficients of the equations
of of the type (\ref{gsidon}) or (\ref{ggsidon}). Moreover, when
substituting elements from $\MMM$ treated as integer numbers to
(\ref{sidon}),(\ref{gsidon}) or $(\ref{ggsidon})$ we get an integer
number whose absolute value is less than $\lambda(m)\cdot n^{m}$,
where $\lambda(m)$ is some function of $m$ which does not depend on
$n$.

Fix an positive even integer $m$.

Let us take all possible representations of $n$ as the sum of
squares $\sum n_{i}^{2}$ of integer numbers. Such a representation
contains at most $n$ summands, moreover, the sum of these numbers is
even.

Let us choose the set ${\cal M}'$ of cardinality $n$ and the abelian
group ${\cal M}'\supset {\cal M}$ of cardinality $|\MM|= O(n^{m})$
to avoid solutions of (\ref{sidon}),(\ref{gsidon}). By shifting them
by  a large integer number we get the group $\MM$ and the set $\MMM$
in it  avoiding solutions of (\ref{ggsidon}) as well.

Enumerate elements of $\MMM$ by $x_{1},\dots, x_{n}$ and fix the
vector $(x_{1},\dots, x_{n})$ in $\Z^{n}$.

Let us associate with points of $y=\Z^{n}$ integer numbers $\langle
x,y\rangle$. If two points $y,y'$ are at  distance $\sqrt{2m}$ then
$\langle y-y',x\rangle \neq 0$. Indeed, the coordinates of $y-y'$
form a decomposition of $n$ into a sum of squares, and $x_{1},\dots,
x_{n}$ are chosen in such a way that none of the equations of types
(\ref{gsidon}),(\ref{ggsidon}) holds. So, the scalar products are
different.

Besides, $\langle y_{1}',x\rangle$ does not exceed $(max_{ x\in \MM}
|x|)\cdot m$ which grows as $O(n^{m})$.

Thus, taking the residue class of this scalar product modulo
$\lambda(m)\cdot n^{m}+1$, we get a colouring of $\Z^{n}$ with
forbidden distance $\sqrt{2m}$ in the $l_{2}$ norm.

The proof in any other norm $l_{\alpha}$ with the same estimate is
similar.

\section{Lower Estimates for The Chromatic Numbers of Integer Lattices}

We have proved upper polynomial estimates for $\Z^{n}$. Now, we are
going to prove polynomial lower estimates. We shall show that for
many fixed $m$, the exponents $c\cdot n^{m}$ for
$\chi(\Z^{n},\sqrt{2m})$ are optimal.

Let $S$ be a metric space, let $d$ be a critical distance. By a {\em
$(M,D)$-critical configuration} we mean a subset ${\cal M}\subset S$
of cardinality $M$ such that for every subset ${\cal M}'\subset
{\cal M}$ with no two points $a,b\in {\cal M}'$ with critical
distance $l(a,b)=d$, the cardinality $| {\cal M}'|$ is at most $D$.

By the pigeon-hole principle, if there is a critical
$(M,D)$-configuration in $S$ then $\chi(S,d)\ge \chi({\cal M},d)\ge
\frac{M}{D}$.

The lower estimate from $\chi(\Z^{n},2)$ is in fact well-known. We
present it here for consistency.

We shall present a concrete critical configuration. Fix a natural
number $n$, and let $S=\Z^{n}$, ${\cal M}$ be a set of all points
from $\Z^{n}$ having three coordinates equal to $1$ and the others
equal to zero, and let ${\cal M}'$ be a subset of ${\cal M}$ where
no two points are at a distance two. Clearly, $|{\cal
M}|=\left(\begin{array}{c} {n}  \cr {3}\end{array}\right)$. Every
point from ${\cal M}'$ can be considered as a triple of those
coordinates equal to one. Now, the fact that two points $x$ and $y$
from ${\cal M}'$ are at distance not equal to two means that the
corresponding triples are either disjoint or have exactly two common
coordinates. Now, it is easy to see, that the number of such
elements from ${\cal M}'$ can not exceed $n$. So, ${\cal M}$ is an
$(M,D)$-critical configuration, where $M=|{\cal
M}|=\frac{n(n-1)(n-2)}{6}$ and $D=n$.

Thus, the chromatic number for $\Z^{n}$ with critical distance $2$
is greater than or equal to $\frac{(n-1)(n-2)}{6}$.

The methods of finding $(M,D)$-critical configuration are widely
used for establishing lower bounds for the chromatic number of
lattices in arbitrary dimension, the main tool being the well known
Frankl--Wilson theorem \cite{FW} with its further modifications (see
\cite{Rai}).

\begin{thm}[The Frankl--Wilson Theorem]
Let us fix an $n$-element set ${\cal N}=\{1,\dots,n\}$. Let $p$ be a
prime power, and let $a$ be a positive integer number, $a<2p$.
Furthermore, let ${\cal M}$ be a collection of $a$-element subsets
of ${\cal N}$ such that the cardinality of the intersection of any
two of them is not equal to $a-p$. Then $|{\cal
M}|\le\left(\begin{array}{c}{n}\cr {p-1}\end{array}\right)$.
\end{thm}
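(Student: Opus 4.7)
The plan is to use the standard polynomial (linear algebra) method over $\mathbb{F}_p$.

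First I would rephrase the hypothesis modularly. For distinct $A, B \in \mathcal{M}$ one has $|A \cap B| \in \{0, 1, \dots, a-1\}$, and $a < 2p$ means the only value in this range congruent to $a \pmod p$ is $a - p$; so ``$|A \cap B| \neq a - p$'' is equivalent to ``$|A \cap B| \not\equiv a \pmod p$''. Set $L = \{0, 1, \dots, p-1\} \setminus \{a \bmod p\}$, so $|L| = p-1$, and for each $A \in \mathcal{M}$ with characteristic vector $v_A \in \{0,1\}^n$ define
$$f_A(x_1, \dots, x_n) = \prod_{\ell \in L} \Bigl( \sum_{i \in A} x_i - \ell \Bigr) \in \mathbb{F}_p[x_1, \dots, x_n].$$

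Next I would verify linear independence of $\{f_A\}_{A \in \mathcal{M}}$ by evaluating at characteristic vectors. Since $\sum_{i \in A}(v_B)_i = |A \cap B|$, one has $f_A(v_B) = \prod_{\ell \in L}(|A \cap B| - \ell)$; for $B = A$ this is nonzero in $\mathbb{F}_p$ (no $\ell \in L$ is congruent to $a$ modulo $p$), while for $B \neq A$ the reduced hypothesis puts $|A \cap B| \bmod p$ into $L$, killing one factor. The triangular system $f_A(v_B) = c_A \delta_{A, B}$ then forces linear independence.

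It remains to bound the dimension of the ambient space. Each $f_A$ has degree $p - 1$ and, after reducing modulo $x_i^2 - x_i$, is multilinear. Since all evaluation points lie on the slice $S_a = \{v \in \{0,1\}^n : \sum_i v_i = a\}$, I may also use the identity $\sum_i x_i \equiv a$ on $S_a$. Multiplying a multilinear monomial $m_T = \prod_{i \in T} x_i$ by $\sum_i x_i$ and reducing yields
$$(a - |T|)\, m_T \;=\; \sum_{j \notin T} m_{T \cup \{j\}} \qquad \text{on } S_a.$$
When $|T| \not\equiv a \pmod p$ this expresses $m_T$ in terms of degree-$(|T|+1)$ monomials; when $|T| \equiv a \pmod p$ one instead gets a relation $\sum_{j \notin T} m_{T \cup \{j\}} \equiv 0$ thinning the degree-$(|T|+1)$ piece. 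Iterating, the space of multilinear polynomials of degree $\leq p-1$ on $S_a$ compresses to one of dimension at most $\binom{n}{p-1}$, and combining with linear independence gives $|\mathcal{M}| \leq \binom{n}{p-1}$.

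The hard part is this last slice-dimension bound. When $a \equiv p-1 \pmod p$ the push-up works cleanly at every lower degree and the space is spanned by the $\binom{n}{p-1}$ monomials of degree exactly $p-1$; for the other residues of $a$ modulo $p$ one must combine the push-ups with the degenerate relations at the critical degree $|T| \equiv a \pmod p$, and the correct count turns out to be the rank of the set-inclusion matrix of $(p-1)$-subsets into $a$-subsets (Gottlieb over $\mathbb{Q}$, Wilson modularly). It is this bookkeeping that refines the naive multilinearity bound $\sum_{k \leq p-1}\binom{n}{k}$ into the sharp $\binom{n}{p-1}$.
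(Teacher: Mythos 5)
First, note that the paper does not prove this statement at all: it is quoted as the classical Frankl--Wilson theorem with a citation to \cite{FW} (and \cite{Rai3} for refinements), so your argument can only be judged on its own merits, and as written it has two genuine gaps. The first is the sharp bound itself. Your reduction of ``$|A\cap B|\neq a-p$'' to ``$|A\cap B|\not\equiv a \pmod p$'' and the linear independence of the polynomials $f_A$ are fine (for $p$ prime), but that argument by itself only bounds $|\mathcal{M}|$ by the dimension of the multilinear polynomials of degree at most $p-1$, i.e.\ by $\sum_{i\le p-1}\binom{n}{i}$. Everything beyond that is exactly the step you defer: your push-up recursion on the slice degenerates precisely at degrees $|T|\equiv a\pmod p$, and you acknowledge this and appeal to ``Gottlieb over $\Q$, Wilson modularly'' without carrying out the count --- but the modular rank of inclusion matrices is a delicate matter and is essentially equivalent in difficulty to what you are trying to prove, so this cannot be left as bookkeeping. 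The standard way to close this gap within your own framework is not to compute the dimension of functions on the slice, but to enlarge the independent family: adjoin, for every $T$ with $|T|\le p-2$, the multilinear reduction of $g_T(x)=\bigl(\prod_{i\in T}x_i\bigr)\bigl(\sum_{i=1}^n x_i-a\bigr)$; each $g_T$ vanishes at every characteristic vector of an $a$-set, the combined family $\{f_A\}\cup\{g_T\}$ is linearly independent (evaluate at the $v_A$ first, then use a leading-monomial argument for the $g_T$), and the dimension count $|\mathcal{M}|+\sum_{i\le p-2}\binom{n}{i}\le\sum_{i\le p-1}\binom{n}{i}$ yields exactly $|\mathcal{M}|\le\binom{n}{p-1}$.

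The second gap is the prime-power hypothesis. The statement (and the paper's subsequent use of it, where $p$ is explicitly allowed to be a power of a prime) requires $p=q=r^k$; your construction works over $\mathbb{F}_p$ and encodes the hypothesis as nonvanishing of $\sum_{i\in A}x_i-\ell$ in characteristic $p$. When $q$ is a proper prime power, integers reduce only modulo the characteristic $r$, so ``$|A\cap B|\not\equiv a\pmod q$'' is not expressible as nonvanishing of such linear forms over any field, and the polynomial method in this naive form does not apply; $\Z/q\Z$ is not a field, so the linear-independence argument also breaks. The prime-power case is genuinely handled in \cite{FW} by a different mechanism (rank and divisibility arguments for inclusion matrices), so you must either restrict your theorem to primes --- which is weaker than what the paper later invokes --- or replace the final step by that machinery.
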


For modifications of the Frankl-Wilson Theorem, see \cite{Rai3}.

Now, we shall see that for many numbers $2m$ the exponent $m$ in the
upper estimate $n^{m}$ for the chromatic number
$\chi(\Z^{n},\sqrt{2m})$ is optimal for $m$ being a power of a prime
number. Indeed, we consider subsets of ${\cal N}$ as elements of
$\Z^{n}$ with coordinates being equal to $1$ and $0$ ($i$-th
coordinate is equal to $1$ if and only if $i\in {\cal N}$).

The fact given below, written in \cite{Rai}; however, the same
argument was treated as a lower estimate for the chromatic number of
$\R^{n}$, not of $\Z^{n}$.
\begin{thm}
Let $p$ be a power of a prime number. Then
$\chi(\Z^{n},\sqrt{2p})\ge \frac{\left(\begin{array}{c} n \cr
2p-1\end{array}\right)}{\left(\begin{array}{c} n \cr
p-1\end{array}\right)}$.
\end{thm}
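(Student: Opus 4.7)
The plan is to apply the Frankl--Wilson Theorem to a concrete $(M,D)$-critical configuration inside $\Z^n$, exactly in the spirit of the $\chi(\Z^n,2)$ argument given above, and then invoke the pigeon-hole principle.

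First I would take ${\cal N}=\{1,\dots,n\}$ and choose as the configuration ${\cal M}$ the set of all characteristic vectors in $\{0,1\}^n \subset \Z^n$ of $(2p-1)$-element subsets of ${\cal N}$. Clearly $|{\cal M}|=\binom{n}{2p-1}$. For two such vectors $x_A,x_B$ corresponding to subsets $A,B$ of common size $a=2p-1$, one has
\[
\|x_A-x_B\|_2^2 \;=\; |A\triangle B| \;=\; 2a - 2|A\cap B| \;=\; 2(2p-1) - 2|A\cap B|,
\]
so the Euclidean distance between $x_A$ and $x_B$ equals $\sqrt{2p}$ if and only if $|A\cap B| = p-1$. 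In particular, $|A\cap B|=a-p$ is precisely the intersection size forbidden in the Frankl--Wilson setup with parameters $a=2p-1<2p$.

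Next I would bound the size of an independent set in the corresponding distance graph on ${\cal M}$. Let ${\cal M}'\subset{\cal M}$ be a subcollection no two of whose members are at distance $\sqrt{2p}$; equivalently, the associated family of $(2p-1)$-element subsets has no two members intersecting in exactly $p-1=a-p$ elements. Since $p$ is a prime power and $a<2p$, the Frankl--Wilson Theorem applies and yields
\[
|{\cal M}'| \;\le\; \binom{n}{p-1}.
\]
Thus ${\cal M}$ is an $(M,D)$-critical configuration with $M=\binom{n}{2p-1}$ and $D=\binom{n}{p-1}$.

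Finally, by the pigeon-hole principle applied to any proper colouring of $\Z^n$ with forbidden distance $\sqrt{2p}$, each colour class restricted to ${\cal M}$ is such an independent set, so the number of colours is at least $M/D$, which gives
\[
\chi(\Z^n,\sqrt{2p}) \;\ge\; \frac{\binom{n}{2p-1}}{\binom{n}{p-1}},
\]
as required. The only nontrivial step is the distance/intersection translation, and that is a one-line computation; all the real combinatorial content is packaged inside the Frankl--Wilson bound, so I do not anticipate any genuine obstacle here.
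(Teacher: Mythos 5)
Your proposal is correct and follows exactly the paper's argument: take the characteristic vectors of $(2p-1)$-element subsets, observe that distance $\sqrt{2p}$ corresponds to intersection size $p-1=a-p$, bound independent sets by the Frankl--Wilson Theorem, and conclude by the pigeon-hole bound $\chi\ge M/D$. You have simply written out in full the computation that the paper leaves as a one-line remark.
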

\begin{proof}
Indeed, it suffices to take all vectors of length $2p-1$ and forbid
the intersection $p-1$, or, equivalently, forbid the distance
$\sqrt{2p}$. The claim follows.
\end{proof}

Thus, for  $p$ being a power of a prime number, we proved that the
growth of $\chi(\Z^{n},\sqrt{2p})$ is polynomial in $n$ of degree
$p$.


\section{Estimates for rational lattices $\Q^{n}$.}

\begin{thm}
For rational $k$ which can be represented as a sum of two squares of
rational numbers one has
 $\chi(\Q^{2},\sqrt{k})=2$. Otherwise
$\chi(\Q^{2},\sqrt{k})=1$.

Let $m= \frac{p}{q}\sqrt{l}$, where $l$ is an odd number
representable a sum of two squares of integers. Then

$\chi(\Q^{3},m)= 2$;

$\chi(\Q^{4},m)\le 4$.\label{prevth}
\end{thm}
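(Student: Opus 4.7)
The plan is to combine a scaling reduction with a $2$-adic analysis of difference vectors, then color each connected component of the critical-distance graph independently. Since $\chi(\Q^n,\sqrt{c^2 k})=\chi(\Q^n,\sqrt{k})$ for every nonzero rational $c$, in Part 1 I rescale so that the squared distance is a squarefree positive integer (then automatically a sum of two integer squares by the classical characterization), and in Part 2 scaling by $q/p$ reduces the critical distance to $\sqrt{l}$. The lower bounds are immediate: if $k$ is not a sum of two rational squares no critical pair exists and $\chi=1$; otherwise $(0,0)$ and $(\alpha,\beta)$ with $\alpha^2+\beta^2=k$ witness $\chi\ge 2$, and similarly $l=\alpha^2+\beta^2+0^2$ gives $\chi(\Q^3,m)\ge 2$.

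The key technical step is a $2$-adic containment lemma: any rational difference vector $(a_1,\dots,a_n)$ whose squared length equals the prescribed target lies in $R^n$ for $n=2,3$, and in $\tfrac12 R^4$ for $n=4$, where $R=\Z_{(2)}$ denotes the ring of rationals with odd denominator. This follows from a case analysis on the valuations $\nu_2(a_i)$, using $u^2\equiv 1\pmod 8$ for every $2$-adic unit $u$: any assumed negative $\nu_2(a_i)$ forces $\nu_2(\sum a_i^2)$ to disagree with $\nu_2$ of the target in every subcase. The denominator $\tfrac12$ for $n=4$ is genuine, since four odd squares sum to $\equiv 4\pmod 8$ (as witnessed by $(\tfrac12,\tfrac12,\tfrac12,\tfrac12)$ when $l=1$).

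Granted the containment, every component of the critical graph on $\Q^n$ sits inside a coset of $R^n$ (respectively $\tfrac12 R^4$), so applying the same coloring on every coset independently reduces the problem to coloring $R^n$ (or $\tfrac12 R^4\cong R^4$ after doubling). Reduction modulo $2$ is well-defined on $R$ since $R/2R=\Z/2\Z$. For $\chi(\Q^2,\sqrt{k})\le 2$ with $k$ odd I set $f(x,y)=(x+y)\bmod 2$ and use $a+b\equiv a^2+b^2\equiv k\equiv 1\pmod 2$; for $k\equiv 2\pmod 4$ I set $f(x,y)=x\bmod 2$, since both diff coordinates are units mod $2$. For $\chi(\Q^3,m)\le 2$ the coloring $f(x,y,z)=(x+y+z)\bmod 2$ works by the same identity. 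For $\chi(\Q^4,m)\le 4$, after doubling to $R^4$ at squared distance $4l$, the mod-$4$ analysis forces every diff to have either all four coordinates odd or all four even, and the two-bit coloring from Theorem \ref{t1k1}(3) --- namely $f=\bigl(x_1\bmod 2,\; h(x_1)+h(x_2)+h(x_3)+h(x_4)\bmod 2\bigr)$, where $h(x)=\tfrac{x-(x\bmod 2)}{2}\bmod 2$ is the $R$-extension of $\lfloor x/2\rfloor\bmod 2$ --- changes color in both cases (the first bit flips when all $a_i$ are odd; the second bit flips in the all-even case by $\sum a_i/2\equiv l\equiv 1\pmod 2$).

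The main obstacle is the $2$-adic containment lemma, especially the $\Q^4$ case, where one must verify that denominators never exceed $2$. The coset-extension and coloring steps are then routine, though it is worth noting that no such coloring can come from a group homomorphism $\Q^n\to\Z/2\Z$ (since $\Q$ is $2$-divisible), which is exactly why constructing the coloring coset-by-coset is essential.
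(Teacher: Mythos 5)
Your proposal is correct and follows essentially the same route as the paper: a parity colouring on the odd-denominator points (equivalently, reduction mod $2$ in $\Z_{(2)}/2\Z_{(2)}$), the observation that the critical distance never connects points whose difference has coordinates with even denominators (your $2$-adic containment lemma is the paper's remark that points with different powers of $2$ in the denominators ``do not interfere''), extension to all of $\Q^n$ coset by coset (the paper's iterated shifts by $\tfrac{1}{2^k}$), and for $\Q^4$ the same two-bit colouring as in Theorem \ref{t1k1}(3) after absorbing the single allowed factor of $\tfrac12$. Your version merely packages these steps more rigorously, with the containment lemma stated and proved explicitly.
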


\begin{proof}
The statements about $\Q^{2}$ are obvious. Let us pass to $\Q^{3}$.
Without loss of generality we may assume that $m=\sqrt{l}$.

Let us colour $\Q^{3}$ with two colours, as follows. For a point
$(a,b,c)\in \Q^{3}$, take their minimal common denominator $d$ and
write $a=\frac{a'}{d},b=\frac{b'}{d},c=\frac{c'}{d}$. For colouring,
we take the modulo $2$ residue class of $a'+b'+c'$. Obviously, if
two such points are at distance $\sqrt{l}$, then they have different
colours.

Now, notice the following. If at least one of the numbers $a,b,c\in
\Q$ has even denominator in its reduced fraction, then the sum of
squares of $a,b,c$ can not be an integer. Likewise, if at least one
denominator contains $2^{k}$, then the sum of three squares can not
be a square of a rational number with denominator whose power of $2$
is less than $k$.

Now we use the fact that the points with different exponents of $2$
in denominators ``do not interfere'': the sum of three squares of
integer numbers, at least one of which is odd, can not be even.

Thus, the above colouring can be extended to points of the rational
lattice having coordinates with even denominators. Indeed, we first
shift the initial lattice points with their colours by vectors
$(\frac{1}{2},0,0)$,$(0,\frac{1}{2},0)$,$(0,0,\frac{1}{2})$, then we
shift our colouring by coordinate vectors of length $\frac{1}{4}$,
etc.

To get the estimate for $\Q^{4}$, we shall first colour points with
no coordinate having denominator divisible by four. Every vector $v$
of such sort can be represented as
$(\frac{a}{2s},\frac{b}{2s},\frac{c}{2s},\frac{d}{2s})$, where $s$
is an odd number (possibly, some of $a,b,c,d$ are even). With such a
point we associate the colour $\alpha(v)$ which is equal to residue
class of $a$ modulo $2$.

Let $v_{1}=(\frac{a}{2s},\frac{b}{2s},\frac{c}{2s},\frac{d}{2s})$
and
$v_{2}=(\frac{a'}{2t},\frac{b'}{2t},\frac{c'}{2t},\frac{d'}{2t})$ be
two such vectors; $t,s$ are odd. If $|v_{1}-v_{2}|=l$ then we have
one of two options: either all $a-a'$,$b-b'$,$c-c'$,$d-d'$ are all
odd (in this case $\alpha(v)\neq \alpha(v')$), or all these numbers
are even.

Let us now define $\beta(v)$ as follows. First we define $\beta(v)$
for points from $\Q^{4}$ with all coordinates having odd
denominators: it is just the parity of the sum of numerators. Then
we expand it to points with all denominators of coordinates not
divisible by $4$ by parallel transports by $(\frac{1}{2},0,0,0),
(0,\frac{1}{2},0,0),(0,0,\frac{1}{2},0),(0,0,0,\frac{1}{2})$.

Now, we see that if two vectors $(v_{1},v_{2})$ with denominators of
coordinates not divisible by four are at distance $\sqrt{l}$, then
either $\alpha(v_{1})\neq \alpha(v_{2})$ or $\beta(v_{1})\neq
\beta(v_{2})$. So, we have constructed the four-colouring
$\alpha,\beta$ for all points with denominators of coordinates not
divisible by $4$.

Now, we extend the colouring by shifts by vectors $\frac{1}{2^{l}}$,
where $l\ge 2$. Here we use the fact that the sum of squares of four
integer numbers can not be divisible by $16$ if at least one of them
is odd.

\end{proof}

\begin{thm}
Let $m= \sqrt{2l}\frac{p}{q}$, where $l$ is an odd number such that
$2l$ can be represented as a sum of two integer squares. Then we
have

$\chi(\Q^{4},m)\le 4$, hence, $\chi(\Q^{3},m)\le 4$.
\end{thm}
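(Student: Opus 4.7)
The plan is to mimic the coloring strategy of Theorem \ref{prevth} for $\Q^{4}$, suitably adapted to the case of squared critical distance $2l$ (even, $\equiv 2\bmod 4$) rather than $l$ (odd). After rescaling by $q/p$, we may assume $m=\sqrt{2l}$ with $l$ odd and $2l=a^{2}+b^{2}$ for integers $a,b$. I will construct a proper four-coloring of $\Q^{4}$ as a pair $(\alpha,\beta)$ of $\{0,1\}$-valued functions; the statement for $\Q^{3}$ then follows from the isometric embedding $(x_{1},x_{2},x_{3})\mapsto(x_{1},x_{2},x_{3},0)$.

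The key structural fact I would establish first: for any $w\in\Q^{4}$ with $|w|^{2}=2l$, each coordinate $w_{i}$ has $2$-adic valuation $\ge 0$ (no factor of $2$ in its reduced denominator), and exactly two of the four coordinates have odd numerator. This follows from a $2$-adic analysis: since $v_{2}(2l)=1$ and the square of any $2$-adic unit is $\equiv 1\pmod 8$, comparing $v_{2}(\sum w_{i}^{2})$ with $v_{2}(2l)$ forces the minimum $2$-adic valuation of the coordinates to be $0$ with exactly two coordinates attaining it. This parallels the parity count used in the proof of Theorem \ref{t1k1} for $\Z^{4}$ at distance $\sqrt{4k+2}$.

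Using this, I would first define $(\alpha,\beta)$ on the sublattice of $v\in\Q^{4}$ whose coordinate denominators are not divisible by $4$: write $v=(a,b,c,d)/(2s)$ with $s$ odd, and set $\alpha(v)=a\bmod 2$ and $\beta(v)=(a+c)\bmod 2$, two independent $\mathbb{Z}/2$-linear combinations encoding the $\Z^{4}$-coloring of Theorem \ref{t1k1}. The structural fact above guarantees that for two such points at distance $\sqrt{2l}$, the difference has exactly two odd numerators, so at least one of $\alpha,\beta$ flips. I would then extend $(\alpha,\beta)$ to all of $\Q^{4}$ by successive parallel transports: by $\tfrac{1}{2^{k}}e_{i}$ for $k\ge 2$ on the $2$-adic side, and by $\tfrac{1}{p^{k}}e_{i}$ for each odd prime $p$.

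The main obstacle is verifying consistency of this extension for odd primes $p\equiv 3\pmod 4$. By Fermat's two-squares theorem, the hypothesis $2l=a^{2}+b^{2}$ is equivalent to each prime $p\equiv 3\pmod 4$ appearing in $2l$ to an even power. Running the $p$-adic analog of the $2$-adic analysis above shows that this constraint bounds the $p$-adic valuations of coordinates of any $w$ with $|w|^{2}=2l$ compatibly with shifts by $\tfrac{1}{p^{k}}e_{i}$; without the hypothesis, such primes would permit unexpected $p$-adic denominators that break the shift construction. Once this is verified, $(\alpha,\beta)$ gives the required four-coloring, and the $\Q^{3}$ bound is immediate from the embedding above.
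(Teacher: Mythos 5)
Your structural fact (coordinates of any $w\in\Q^{4}$ with $|w|^{2}=2l$ have no $2$ in their denominators, and exactly two numerators are odd over an odd common denominator) is correct and is exactly the paper's key observation; the problem is that the colouring you build on it is not proper, and your own structural fact is what shows it. If $v=(a,b,c,d)/(2s)$ and $v'=(a',b',c',d')/(2t)$ with $s,t$ odd are at distance $\sqrt{2l}$, the difference is $(at-a's,\dots)/(2st)$, and the structural fact forces all four integers $at-a's,\dots$ to be \emph{even}, i.e.\ $a\equiv a'$, $b\equiv b'$, $c\equiv c'$, $d\equiv d'\pmod 2$. So on every critical pair inside your base set \emph{neither} $\alpha$ nor $\beta$ flips, the opposite of what you assert. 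In the starkest form: any point with odd denominators, rewritten over a denominator $2s$, has all numerators even, so all of $\Z^{4}$ is monochromatic in your scheme, while $(0,0,0,0)$ and $(a,b,0,0)$ with $a^{2}+b^{2}=2l$ (which exists by hypothesis) form a critical pair. The parities that actually change on a critical pair are those of the numerators of the \emph{difference written over an odd denominator} (odd in exactly two of the four places), and that is what the paper's colouring uses: colour the points with odd denominators by the parity vector of their numerators, as in the first statement of Theorem \ref{t1k1} (four colours suffice because parity classes differing in exactly two coordinates can be given distinct colours, e.g.\ by identifying antipodal classes), then transport this colouring coset by coset to the rest of $\Q^{4}$ via shifts by $2^{-k}e_{i}$, the structural fact (equivalently: a sum of four integer squares, at least one odd, is never divisible by $8$) guaranteeing that a critical pair never straddles two cosets. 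Note also that your particular functionals would fail even if applied to the correct numerators: $x_{1}$ and $x_{1}+x_{3}$ both vanish on the weight-two parity vector supported on coordinates $2$ and $4$, so e.g.\ $(0,0,0,0)$ and $(0,1,0,1)$ (a critical pair for $l=1$) would still share a colour; you need two functionals whose common kernel contains no weight-two vector, such as $x_{1}+x_{2}$ and $x_{2}+x_{3}$.

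The closing paragraph about odd primes is a detour: every point all of whose coordinate denominators are odd already lies in your base set and is coloured directly (the parity of a numerator over an odd denominator is well defined, being unchanged under multiplication of numerator and denominator by an odd number), so no extension by $p^{-k}e_{i}$ for odd $p$ is needed, and the hypothesis that $2l$ is a sum of two integer squares plays no role in the upper-bound argument --- it only ensures the distance is realised. The only extension that needs justification is the $2$-adic one, exactly as in Theorem \ref{prevth}.
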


\begin{proof}
The proof for those points in $\Q^{4}$ for points whose coordinates
have odd denominators, repeats the argument for $\Z^{4}$ from
Theorem \ref{t1k1}: instead of parities of integer numbers, we take
parities of numerators of fractions with odd denominators.

Then this colouring extends to $\Q^{4}$ just by shifting it along
coordinate vectors of lengths $\frac{1}{2^{k}}, k>0$ as in the proof
of Theorem \ref{prevth}.

Here one should take into account that the sum of four squares of
integer numbers can not be divisible by $8$ if at least one of these
numbers is odd.
\end{proof}

\section{Colourings of Some Finite Graphs}

Let us consider the fields $\Z_{3}$ and $\Z_{5}$; we shall construct
graphs $\Z_{3}^{n}$ and $\Z_{5}^{m}$, where for the (pseudo)metric
we take the $l_{2}$-metric taken modulo $3$ (resp., modulo $5$).
\begin{thm}
$\chi(\Z_{3}^{n},1)\le c (\sqrt[3]{9})^{n}.$\label{3grid}
\end{thm}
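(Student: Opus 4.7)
The plan is to translate the problem into a Hamming-weight condition modulo $3$ and then exhibit a large independent subgroup whose cosets give the desired coloring.

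First I would observe that every nonzero element of $\Z_3$ satisfies $a^2 \equiv 1 \pmod 3$, so for $x,y\in\Z_3^n$ the pseudo-$l_2$ quantity $\sum_i (x_i-y_i)^2 \pmod 3$ is just the Hamming distance $|\{i:x_i\neq y_i\}|$ reduced modulo $3$. Consequently the edge set of the graph $(\Z_3^n,1)$ consists exactly of those pairs $\{x,y\}$ with Hamming distance congruent to $1$ modulo $3$, and the problem becomes: color $\Z_3^n$ so that no two points at Hamming distance $\equiv 1\pmod 3$ share a color.

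Next I would build an independent subgroup by blocks of three coordinates. Write $n=3k+r$ with $r\in\{0,1,2\}$, and for $j=1,\dots,k$ let $e_j\in\Z_3^n$ be the indicator vector of the block $\{3j-2,3j-1,3j\}$. Set $H=\langle e_1,\dots,e_k\rangle\le \Z_3^n$. Then $|H|=3^k$, and any element of $H$ has Hamming weight equal to the sum of contributions from each block, each contribution being either $0$ (coefficient $0$) or $3$ (coefficient $1$ or $2$). Hence every $h\in H$ has weight a multiple of $3$, in particular never $\equiv 1\pmod 3$. By translation invariance of Hamming distance, every coset $v+H$ is therefore an independent set in our graph: for $h_1,h_2\in H$ the distance between $v+h_1$ and $v+h_2$ is the weight of $h_1-h_2\in H$, which is $\equiv 0\pmod 3$.

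Finally I would color each coset by a single color. The number of cosets is $[\Z_3^n:H]=3^{n-k}=3^{n-\lfloor n/3\rfloor}$, which is at most $3^{2n/3+1}=3\cdot(\sqrt[3]{9})^{n}$, giving the statement with $c=3$ (and exactly $(\sqrt[3]{9})^{n}$ when $3\mid n$).

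There is no real obstacle: the whole argument rests on the single observation that squaring in $\Z_3$ collapses $\{\pm 1\}$ to $1$, after which the problem reduces to a Cayley-type coloring question that is solved by the block construction. The mildly nontrivial check is simply that the subgroup $H$ avoids weights $\equiv 1\pmod 3$, which follows because each block contributes weight $0$ or $3$; no deeper coding-theoretic input is needed for the stated bound.
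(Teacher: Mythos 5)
Your proof is correct and is essentially the paper's construction expressed in coset language: the paper's $9$-colouring of each $3$-coordinate block of $\Z_{3}^{3}$ by $(b-a,c-a)$ is exactly the quotient by the subgroup generated by the all-ones vector of that block, so colouring by cosets of your $H=\langle e_{1},\dots,e_{k}\rangle$ reproduces the paper's inductive colouring, both resting on the observation that within a colour class all Hamming (hence pseudo-$l_{2}$) distances are $\equiv 0 \pmod 3$. The only difference is that the paper also colours the leftover pair of coordinates by $x+y \bmod 3$ (i.e.\ quotients additionally by $\langle(1,2)\rangle$, whose weight $2\not\equiv 1$), saving a factor of $3$ when $n\equiv 2 \pmod 3$, which is immaterial for the stated bound $c(\sqrt[3]{9})^{n}$.
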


\begin{proof}
The proof is by induction on the dimension $n$. It suffices for us
to prove that for $n=2+3k$ we have $\chi(\Z_{3}^{n})\le 3^{2k+1}$
for positive integers $k$.

For $\Z_{3}^{2}$, let us use three colours to colour $9$ points: we
just take the colour to be the modulo three residue class of the sum
of coordinates.

Now, assume we have a proper colouring of $\Z_{3}^{2+3k}$; let us
colour $\Z_{3}^{5+3k}$ as follows. We colour the first $2+3k$
coordinates by using $3^{2k+1}$ colours, and take $9$ colours for
$\Z_{3}^{3}$.  The colour for $\Z_{3}^{5+3k}$ will consist of two
components, the one for the first $2+3k$ coordinates, and the one
for the last three coordinates. The last component will have $9$
colours, namely, for $(a,b,c)\in \Z_{3}^{3}$ we take the colour to
be $(b-a,c-a)\in \Z_{3}\oplus \Z_{3}$.  If for two points $(a,b,c)$
and $(a',b',c')$ from $\Z_{3}^{3}$ we have $b-a\equiv b'-a' \mbox{
mod } 3$ and $c-a\equiv c'-a'\mbox{ mod } 3$, then these points
either coincide if $a=a'$, or these points $(a,b,c)$ and
$(a',b',c')$ are at a distance three if $a\neq a'$. In any case,
this colouring forbids distances congruent to $1$ and $2$ modulo
$3$.

We claim that for such a colouring of $\Z_{3}^{5+3k}$ no two points
at distance congruent to $1$ modulo $3$ have the same colour.
Indeed, for two points $x,y\in \Z_{3}^{5+3k}$ at distance congruent
to $1$ modulo $3$, either the distance between the projections to
the first $2+3k$ coordinates is congruent to $1$ modulo $3$, or the
distance between the projections to the last $3$ coordinates is not
congruent to $0$ modulo $3$. In the first case, the colours of these
points have different first component; in the second case, the
colours have different second component.

This completes the induction step.
\end{proof}

\begin{thm}
$\chi(\Z_{5}^{n},1)\le c' (\sqrt{5})^{n}.$\label{5grid}
\end{thm}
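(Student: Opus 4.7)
The plan is to follow the inductive scheme of Theorem \ref{3grid}, now raising the dimension by $2$ at each step and multiplying the number of colours by $5$; this produces the exponent $(\sqrt{5})^n$. Correspondingly, the main new ingredient I need is a colouring of $\Z_5^2$ using only $5$ colours such that any two monochromatic points have squared $l_2$-distance divisible by $5$.

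For that base colouring I take $\varphi\colon \Z_5^2 \to \Z_5$ defined by $\varphi(a,b) = b - 2a \pmod 5$. Two points of the same colour differ by a vector of the form $(t, 2t)$ with $t \in \Z_5$, whose squared length is $t^2 + 4t^2 = 5t^2 \equiv 0 \pmod 5$. Thus any two monochromatic points lie at squared distance $\equiv 0 \pmod 5$. The arithmetic fact making this work is that $-1$ is a quadratic residue modulo $5$, realised by $2^2 \equiv -1$; this is precisely the reason the prime $5$ produces the factor $\sqrt{5}$ in the exponent, in analogy with the use of $\Z_3^3$ in the previous theorem.

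The induction step is then a direct product. Assume one has a proper colouring of $\Z_5^n$ with forbidden distance $1$ using at most $N$ colours. Colour $x \in \Z_5^{n+2}$ by the ordered pair consisting of the old colour of the first $n$ coordinates of $x$ and the value of $\varphi$ applied to the last two coordinates of $x$; this uses at most $5N$ colours. If $x, y \in \Z_5^{n+2}$ have squared distance $\equiv 1 \pmod 5$, then in particular this squared distance is not $\equiv 0 \pmod 5$, so at least one of the two projected squared distances (on the first $n$ coordinates or on the last $2$ coordinates) is not $\equiv 0 \pmod 5$. In that projection the corresponding colour component distinguishes $x$ and $y$, so their product colours differ. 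Hence $\chi(\Z_5^{n+2},1) \le 5\,\chi(\Z_5^n,1)$.

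Iterating from the base case $\chi(\Z_5^2,1) \le 5$ gives $\chi(\Z_5^{2k},1) \le 5^k = (\sqrt{5})^{2k}$. For odd $n$ one handles the leftover coordinate with the trivial $5$-colouring by identity $\Z_5 \to \Z_5$, and the resulting constant slack is absorbed into $c'$ (any $c' \ge \sqrt{5}$ will do). The only genuinely new input is the base-case colouring of $\Z_5^2$, which rests on the single observation $2^2 + 1^2 \equiv 0 \pmod 5$; the remainder of the argument is formally identical to the proof of Theorem \ref{3grid}, so no real obstacle is anticipated.
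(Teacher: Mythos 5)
Your proof is correct and follows essentially the same route as the paper: the paper's base colouring of $\Z_5^2$ is $(a,b)\mapsto a-2b \bmod 5$ (yours, $b-2a$, is the same up to swapping coordinates, both resting on $1^2+2^2\equiv 0 \pmod 5$), and the induction adding two coordinates and multiplying the number of colours by $5$ is exactly the paper's step. One small repair: the sentence ``in that projection the corresponding colour component distinguishes $x$ and $y$'' is not literally backed by your stated inductive hypothesis (proper for forbidden distance $1$ only) when the first projection has squared distance $\equiv 2,3,4 \pmod 5$, so either carry the stronger invariant that monochromatic points have squared distance $\equiv 0 \pmod 5$ (which your construction in fact yields), or split cases as in the proof of Theorem \ref{3grid}: if the last two coordinates contribute $0 \bmod 5$ then the first projection is at squared distance $\equiv 1 \pmod 5$ and the inductive colouring applies, and otherwise the $\Z_5^2$-component already separates the colours.
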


\begin{proof}
We proceed in a way similar to the above. We establish the induction
base by colouring  $\Z_{5}$ with five different colours; then we
colour $\Z_{5}^{2}$ in five colours so that two points share one
colour whenever they are at distance congruent to $0$ modulo five.
Namely, for $(a,b)\in \Z_{5}^{2}$ we take the colour to be $a-2b
\mbox{ mod 5}$. Then we proceed by induction: for every two new
coordinates we need to multiply the number of colours by $5$, and
the result follows.
\end{proof}

\begin{re}
The above estimates remain true if instead of $1$ we take any
forbidden distance congruent to $1$ modulo $3$ (congruent to $1$
modulo $5$, respectively).
\end{re}

\section{Estimates for Lattices over Some Algebraic Extensions of $\Z$}

Now, let $p_{1},\dots, p_{k}$ be a set of integers. By
$\Q_{p_{1},p_{2},\dots, p_{k}}$ we shall denote the ring of rational
numbers with denominators coprime with $p_{1} \dots
 p_{k}$. By $\Q_{odd}$ we mean the set of rational numbers with odd
denominators only.

\begin{thm}
$\chi(\Q_{odd}^{n},1)=2$. Moreover, for every extension ${\cal K}$
of the ring of integers for which there exists a homomorphism ${\cal
K}\to \Z_{2}$ we have $\chi({\cal K}^{n},1)=2$.
\end{thm}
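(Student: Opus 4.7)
The plan is to exhibit an explicit 2-coloring via the homomorphism $\phi: \mathcal{K} \to \Z_2$. Define $c: \mathcal{K}^n \to \Z_2$ by
\[
c(x_1, \dots, x_n) = \phi(x_1) + \phi(x_2) + \cdots + \phi(x_n).
\]
The key observation is that in $\Z_2$ the Frobenius is trivial: $a^2 = a$ for all $a \in \Z_2$. So if $x, y \in \mathcal{K}^n$ satisfy $\sum_{i=1}^n (x_i - y_i)^2 = 1$ in $\mathcal{K}$, then applying the ring homomorphism $\phi$ yields $\sum_i \phi(x_i - y_i)^2 = 1$ in $\Z_2$, and by $a^2 = a$ this becomes $\sum_i \phi(x_i - y_i) = 1$, i.e.\ $c(x) - c(y) = 1 \neq 0$. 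Thus no two points at distance $1$ share a color, giving $\chi(\mathcal{K}^n, 1) \le 2$.

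For the first assertion, it remains only to check that $\Q_{\mathrm{odd}}$ admits a ring homomorphism to $\Z_2$. Define $\phi(a/b) := a \bmod 2$ for $b$ odd; this is well-defined (if $a/b = a'/b'$ with $b, b'$ odd, then $ab' = a'b$ forces $a \equiv a' \pmod 2$) and a ring homomorphism, since in $\Z_2$ the odd denominators are invertible as $1$.

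For the matching lower bound $\chi(\mathcal{K}^n, 1) \ge 2$, note that since $\mathcal{K}$ extends $\Z$, the points $(0, 0, \dots, 0)$ and $(1, 0, \dots, 0)$ lie in $\mathcal{K}^n$ and are at distance $1$, so at least two colors are required.

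I expect no serious obstacle here: the only subtle point is recognizing that a ring homomorphism to the \emph{characteristic-two} field $\Z_2$ is what allows the quadratic distance condition $\sum(x_i-y_i)^2 = 1$ to collapse to the linear condition $\sum \phi(x_i - y_i) = 1$, and this is exactly the feature exploited by the coloring. A homomorphism to any $\Z_p$ would not suffice without further modification, which is precisely why the statement is restricted to $\Z_2$-targets.
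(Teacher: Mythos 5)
Your proposal is correct and takes essentially the same approach as the paper: the paper's colouring of $\Q_{odd}^{n}$ by mod-$2$ residue classes of numerators is exactly your colouring $c(x)=\sum_{i}\phi(x_{i})$ composed with the reduction homomorphism $\Q_{odd}\to\Z_{2}$, extended verbatim to any ${\cal K}$ admitting such a homomorphism. You merely make explicit the details the paper leaves implicit (well-definedness of $\phi$, the collapse of $\sum\phi(x_{i}-y_{i})^{2}=1$ to $\sum\phi(x_{i}-y_{i})=1$ via $a^{2}=a$ in $\Z_{2}$, and the trivial lower bound from $(0,\dots,0)$ and $(1,0,\dots,0)$).
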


\begin{proof}
Indeed, assume all denominators of coordinates are odd. Then for the
colouring we take the set of modulo 2 residue classes of numerators.
\end{proof}

\begin{thm}
$\chi(\Q_{3}^{n},1)\le c (\sqrt[3]{9})^{n},$ where $c$ is some
universal constant. The same remains true if one replaces $\Q_{3}$
with any subring of $\R$ admitting a homomorphism to $\Z_{3}$.
\end{thm}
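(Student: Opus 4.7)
The plan is to reduce this statement to Theorem \ref{3grid} via a natural ring homomorphism. Since every element of $\Q_{3}$ has the form $a/b$ with $\gcd(b,3)=1$, the map $\phi:\Q_{3}\to\Z_{3}$ defined by $\phi(a/b)=a\cdot b^{-1}\bmod 3$ (where $b^{-1}$ is the multiplicative inverse mod $3$) is a well-defined ring homomorphism. Extending coordinatewise gives $\Phi:\Q_{3}^{n}\to\Z_{3}^{n}$.

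Next, I would take the colouring $c':\Z_{3}^{n}\to C$ constructed in the proof of Theorem \ref{3grid}, which uses $|C|\le c\,(\sqrt[3]{9})^{n}$ colours and, as observed there, is proper for \emph{every} non-zero residue of the squared-distance pseudometric modulo $3$. Pull it back: define $c:\Q_{3}^{n}\to C$ by $c(x)=c'(\Phi(x))$. To verify properness, let $x,y\in\Q_{3}^{n}$ be at Euclidean distance $1$, so that $\sum_{i}(x_{i}-y_{i})^{2}=1$ as an identity in $\Q_{3}$. Since $\phi$ is a ring homomorphism,
\[
\sum_{i}\bigl(\phi(x_{i})-\phi(y_{i})\bigr)^{2}\;=\;\phi\!\left(\sum_{i}(x_{i}-y_{i})^{2}\right)\;=\;\phi(1)\;=\;1\in\Z_{3}.
\]
Thus $\Phi(x)$ and $\Phi(y)$ lie at pseudometric distance $1\bmod 3$ in $\Z_{3}^{n}$, which forces $c'(\Phi(x))\neq c'(\Phi(y))$. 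If $n$ is not of the form $2+3k$ assumed in Theorem \ref{3grid}, I would pad by up to two trivial coordinates; this changes the bound only by a multiplicative constant that can be absorbed into $c$.

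For the generalisation to an arbitrary subring $R\subset\R$ admitting a homomorphism $\psi:R\to\Z_{3}$, the argument goes through verbatim with $\psi$ in place of $\phi$: the only property used is that $\psi$ preserves the sum-of-squares expression and sends the real number $1$ to $1\in\Z_{3}$, both of which follow from $\psi$ being a ring homomorphism.

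There is no real obstacle here: the heart of the proof is encapsulated in the observation that the relation ``sum of squared differences equals $1$'' is algebraic and so is preserved by any ring homomorphism to $\Z_{3}$. The only thing to be careful about is that Theorem \ref{3grid}'s colouring forbids all non-zero residues mod $3$ (not merely the residue $1$), which is exactly what is needed so that the pullback works without any additional hypothesis on the denominators or numerators of the coordinates of $x-y$.
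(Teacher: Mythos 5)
Your proposal is correct and follows essentially the same route as the paper, which likewise deduces the theorem from Theorem \ref{3grid} by applying the ring homomorphism $\Q_{3}\to\Z_{3}$ (or $\psi:R\to\Z_{3}$) coordinatewise and pulling back the colouring, using that a Euclidean distance-$1$ pair satisfies $\sum_i(x_i-y_i)^2=1$ and hence maps to squared distance $1$ modulo $3$. One minor caveat: your closing remark overstates what is required — only the residue $1$ is ever needed (the paper's colouring is in fact only claimed, and in the base case only valid, for squared distances congruent to $1$ modulo $3$), but since $\phi(1)=1$ this does not affect the validity of your argument.
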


\begin{thm}
 $\chi(\Q_{5}^{n},1)\le c' (\sqrt{5})^{n}$, where $c'$ is some universal constant. Moreover,
 the same is true if one replaces
$\Q_{5}$ with a subring of the ring of integers admitting a
homomorphism to $\Z_{5}$.
\end{thm}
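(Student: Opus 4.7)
The plan is to bootstrap from Theorem \ref{5grid} via the reduction homomorphism. For $R=\Q_5$, the map $\pi\colon \Q_5\to\Z_5$ sending $a/b$ (with $\gcd(b,5)=1$) to $a\cdot b^{-1}\bmod 5$ is a well-defined ring homomorphism, and applying it coordinatewise gives a map $\pi\colon \Q_5^n\to \Z_5^n$. For the general statement, I simply start with the given homomorphism $\pi\colon R\to \Z_5$. The idea is then to fix, using Theorem \ref{5grid} and the remark following it, a proper colouring $c\colon \Z_5^n\to C$ with $|C|\le c'(\sqrt5)^n$ of the graph on $\Z_5^n$ in which two points are adjacent whenever their squared $l_2$-distance is congruent to $1$ modulo $5$ (the remark after Theorem \ref{5grid} guarantees that $1\bmod 5$ is an admissible forbidden squared distance). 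The pulled-back map $c\circ\pi$ will be the desired colouring.

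The verification is the following short computation. Suppose $x,y\in R^n$ satisfy $|x-y|^2=1$ in the Euclidean norm, and write $d=x-y\in R^n$ so that $\sum_{i=1}^n d_i^2=1$ in $R$. Applying the unital ring homomorphism $\pi$ to this identity yields $\sum_{i=1}^n \pi(d_i)^2 = 1$ in $\Z_5$; equivalently, the squared distance between $\pi(x),\pi(y)\in\Z_5^n$ is congruent to $1$ modulo $5$. Hence $c(\pi(x))\ne c(\pi(y))$, so $c\circ\pi$ is a proper colouring of $R^n$ with forbidden distance $1$, and it uses at most $|C|\le c'(\sqrt5)^n$ colours, proving the theorem in both cases simultaneously.

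The only genuine point to check—and the main potential obstacle—is the legitimacy of the step ``$\sum d_i^2=1$ implies $\sum \pi(d_i)^2\equiv 1\pmod 5$''. This requires $\pi$ to be a \emph{unital} ring homomorphism, so that the integer $1$ on the left side (viewed inside $R$) is sent to $1\in\Z_5$; for $\Q_5$ this is immediate from the explicit formula for $\pi$, and in the general statement it is built into the hypothesis. One should also note that the $d_i$ really lie in $R$ (not merely in $\R$), which is why the hypothesis is stated for subrings $R\subset\R$ rather than arbitrary subsets. With these remarks in place the argument is a direct adaptation of Theorem \ref{5grid}; no new combinatorial input is needed.
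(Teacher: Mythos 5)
Your argument is correct and is exactly the paper's route: the paper also reduces coordinates via the ring homomorphism to $\Z_{5}$ and invokes the colouring of Theorem \ref{5grid}, only stating this as a sketch, while you spell out the distance-preservation check $\sum d_i^2=1 \Rightarrow \sum\pi(d_i)^2=1$ in $\Z_5$. No discrepancy with the paper's proof.
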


The two last theorems easily follow from Theorems \ref{3grid} and
\ref{5grid}. The idea is to take the coordinates
 $(x_{1},\dots, x_{n})$ modulo $3$ (repectively, modulo $5$) and to use the estimate for
$\chi(\Z_{3}^{n})$ (respectively, for, $\chi(\Z_{5}^{n})$). Here
``taking the residue class'' means considering the corresponding
ring homomorphism.

\section{Some Open Problems}

We conjecture that all possible critical distances where the
chromatic numbers for $\Z^{3}$ is equal to $3$, are those of form
$2^{k}\sqrt{12l+10}$; besides, we conjecture that the chromatic
number $3$ never occurs for integer lattices in higher dimensions.


The best known upper asymptotic estimate for the chromatic number of
rational spaces still remains the same as for Euclidean spaces of
the same dimension: it is $(3+o(1))^{n}$, see \cite{LR}; the methods
of obtaining these estimates are based on some Vorono\"i tilings of
Euclidean spaces; in other words, these known upper estimates come
from tilings of the Euclidean spaces into smaller parts.

Moreover, every lower bound for $\chi(\Q^{n},\sqrt{d})$ for some
concrete $n,\sqrt{d}$ comes from a concrete finite graph $\Gamma$ in
$\Q^{n}$ with critical distance $\sqrt{d}$.

The possibility to get an exact estimate from a {\em finite} graph
is exactly the  de Bruijn--Erd\H{o}s theorem \cite{BE}. If we
consider such a graph for $\Q^{n}$ and take the common denominator
$D$ of all coordinates of all points of this graph, we get a
homothetic graph $D\Gamma$ in $\Z^{n}$ with critical distance
$D\alpha$. So, all lower estimates for rational lattices actually
come from integer lattices.

It would be interesting to apply the argument of the present paper
to obtain sharper estimates for $\Q^{n}$. The direct approach fails
because when taking some concrete forbidden distance, one will have
to take the maximum over all estimates for $D\alpha$ which tends to
infinity as $n$ tends to infinity.

Our estimates for lattices with rational coordinates with some
restrictions on denominators are somewhat better but use principally
different ideas: some number theoretic properties of $\mbox{ modulo
} p$ reductions. It would be very interesting to get other estimates
for $\chi(\Q^{n})$ by combining the two approaches: the one from the
present paper and the one using Vorono\"i tilings.

 We have found upper estimates for $\chi(\Z^{n},\sqrt{d})$ for
every fixed $d$ as $n$ tends to infinity. If we fix a concrete $n$,
then we have estimates for odd d: $2$ colours, for $d$ not divisible
by $3$: $c_{1}\cdot (\sqrt[3]{9})^{n}$ colours, and for $d$ not
divisible by $5$ we get $c_{2}\cdot (\sqrt{5})^{n}$ colours. All
these upper bounds are better than the best known estimates for
rational lattices, which is $(3+o(1))^{n}$. So, it would be
interesting to find an upper bound for
$\mbox{max}_{30|d}(\chi(\Z^{n},\sqrt{d}))$, where the maximum is
taken over all $d$ divisible by $30$. Possibly, there is a way to
elaborate similar methods for other prime numbers; however, an
argument for numbers whose denominators are not divisible by $7$
similar to those given for numbers whose denominators are coprime
with $2,3,5$ will give an estimate which is worse than the
well-known one for rational lattices.

\end{document}